\renewcommand{\ALG@name}{Subroutine}
\newif\ifPDF
\newtheorem{theorem}{Theorem}[section]
\newtheorem{lemma}[theorem]{Lemma}
\newtheorem{corollary}[theorem]{Corollary}
\newcommand{\dint}{\displaystyle\int}
\newcommand{\eps}{\varepsilon}
\newcommand{\bbR}{\mathbb R} \newcommand{\bbS}{\mathbb S}
\newcommand{\bzero}{{\mathbf 0}}
 \newcommand{\bn}{\mathbf n}
\newcommand{\bu}{\mathbf u} \newcommand{\bv}{\mathbf v} 
 \newcommand{\bx}{\mathbf x}
\newcommand{\cA}{\mathcal A} \newcommand{\cB}{\mathcal B}
\newcommand{\cC}{\mathcal C}  
\newcommand{\cE}{\mathcal E} 
 \newcommand{\cH}{\mathcal H}
\newcommand{\cM}{\mathcal M} 
\newcommand{\cO}{\mathcal O}  
 \newcommand{\cR}{\mathcal R}
\newcommand{\wt}{\widetilde}
\newcommand{\wh}{\widehat}
\newcommand{\doubleline}[1]{{\overline{\overline #1}}}
\newenvironment{keywords}
{\noindent{\bf Key words.}\small}{\par\vspace{1ex}}
\newenvironment{AMS}
{\noindent{\bf AMS subject classifications 2010.}\small}{\par}
\title{Image reconstruction in quantitative photoacoustic tomography with the simplified $P_2$ approximation}
\author{
	Christina Frederick\thanks{
		Department of Mathematical Sciences, 
		New Jersey Institute of Technology,
		University Heights, New Jersey 07102; christin@njit.edu
	}
	\and
	Kui Ren\thanks{
		Department of Mathematics and Institute for Computational Engineering and Sciences,
		The University of Texas,
		Austin, TX 78712; ren@math.utexas.edu
	}
	\and
	Sarah Vall\'{e}lian\thanks{
		Department of Mathematics, 
		North Carolina State University,
		Raleigh, NC 27695; scvallel@ncsu.edu
	}
}
\begin{document}

\maketitle



\begin{abstract}
Photoacoustic tomography (PAT) is a hybrid imaging modality that intends to construct high-resolution images of optical properties of heterogeneous media from measured acoustic data generated by the photoacoustic effect. To date, most of the model-based quantitative image reconstructions in PAT are performed with either the radiative transport equation or its classical diffusion approximation as the model of light propagation. In this work, we study quantitative image reconstructions in PAT using the simplified $P_2$ equations as the light propagation model. We provide numerical evidences on the feasibility of this approach and derive some stability results as theoretical justifications.
\end{abstract}


\begin{keywords}
	Photoacoustic tomography, radiative transport equation, simplified $P_2$ approximation, diffusion approximation, hybrid inverse problems, hybrid imaging, image reconstruction, numerical optimization.
\end{keywords}


\begin{AMS}
	 35R30, 65M32, 65Z05, 74J25, 78A60, 78A70, 80A23.
\end{AMS}


\section{Introduction}
\label{SEC:intro}

Photoacoustic tomography (PAT) is a hybrid imaging method that couples ultrasound imaging and optical tomography via the photoacoustic effect, enabling high-resolution imaging of optical contrasts of heterogeneous media. In a typical way to induce the photoacoustic effect, a short pulse of near infra-red (NIR) light is sent into an optically heterogeneous medium, such as a piece of biological tissue, which we denote as $\Omega\in\bbR^3$. In the light propagation process, a portion of the photons are absorbed by the medium. The absorbed energy causes the medium to heat up slightly, and then cool down after the rest of the photons exit. The heating and cooling of the medium forces the medium to expand and then contract. This expansion and contraction generates a pressure field inside the medium which then propagates outwards in the form of ultrasound. The objective of PAT is to measure the ultrasound signals on the surface of the medium and use the measured data to recover information on the interior optical properties of the underlying medium. Interested readers are referred to~\cite{Beard-IF11,CoLaBe-SPIE09,Kuchment-MLLE12,KuKu-HMMI10,LiWa-PMB09,PaSc-IP07,Scherzer-Book10,Wang-DM04,Wang-IEEE08} for overviews of the physical principles as well as the practical applications of PAT.

The radiation of the photons inside the medium is described accurately by the phase-space radiative transport equation (RTE)~\cite{Arridge-IP99,Ren-CiCP10}. Let us denote by $u(\bx,\bv)$ the density of photons at location $\bx\in\Omega$, traveling in direction $\bv\in\bbS^2$ ($\bbS^{2}$ being the unit sphere in $\bbR^3$), \emph{integrated over the period of the pulse}. Then $u(\bx, \bv)$ solves:
\begin{equation}\label{EQ:RTE}
	\begin{array}{rcll}
		\bv\cdot\nabla u(\bx,\bv) + 
		\sigma_a(\bx)u(\bx, \bv)
		&=& \sigma_s(\bx)K(u)(\bx,\bv)
		&\mbox{in}\ X\\
		u(\bx,\bv) &=& f(\bx) &\mbox{on}\ \Gamma_{-}
	\end{array}
\end{equation}
Here $X=\Omega\times \bbS^{2}$ is the phase space of photon propagation, with $\Gamma_-=\{(\bx,\bv): (\bx,\bv)\in\partial\Omega\times \bbS^{2}\ \text{s.t.}\ -\bn(\bx)\cdot \bv >0\}$ denoting the incoming boundary of $X$ ($\bn(\bx)$ being the outer normal vector at $\bx\in\partial\Omega$). The positive functions $\sigma_a(\bx)$ and $\sigma_s(\bx)$ are respectively the absorption and scattering coefficients of the medium. The function $f(\bx)$ denotes the incoming illumination photon source, again integrated over the period of the pulse. To simplify the presentation, we have chosen the illumination source $f$ to be isotropic, i.e. independent of $\bv$. This is by no means technically necessary; see more discussions in the next section.

The scattering operator $K$ is defined as 
\[
K(u)(\bx, \bv)=
\dint_{\bbS^{2}}\Theta(\bv, \bv')u(\bx, \bv') \, d\bv' - u(\bx, \bv)
\]
where the kernel $\Theta(\bv, \bv')$ describes how photons traveling in direction $\bv'$ are scattered into direction $\bv$, and also satisfies the normalization condition $\int_{\bbS^{2}}\Theta(\bv, \bv') \, d \bv'=\int_{\bbS^{2}}\Theta(\bv', \bv) \, d \bv'=1,\  \forall\ \bv\in \bbS^{2}$. In practical applications in biomedical optics, $\Theta$ is often taken to be the Henyey-Greenstein phase function, which depends only on the product $\bv\cdot\bv'$. That is, $\Theta=\Theta_{HG}(\bv\cdot\bv')$~\cite{Arridge-IP99,ReBaHi-SIAM06}:
\[
\Theta_{HG}(\bv\cdot\bv')=\dfrac{1}{4\pi}\dfrac{1-g^2}{(1+g^2-2g\bv\cdot\bv')^{3/2}},
\]
where $g$ is the scattering anisotropy factor of the medium.

The pressure field generated at a position $\bx\in\Omega$, due to the photoacoustic effect, is the product of the Gr\"uneisen coefficient, denoted by $\Xi$, and the absorbed energy density at $\bx$. That is,
\begin{equation}\label{EQ:Int Data RTE}
	H(\bx)=H^{rte}(\bx) \equiv \Xi(\bx) \sigma_a(\bx)\int_{\bbS^{2}}u(\bx,\bv)\, d \bv .
\end{equation}
The Gr\"uneisen coefficient $\Xi$ is a parameter that measures the local photoacoustic efficiency of the medium.

This initial pressure field then evolves following the acoustic wave equation in the form of ultrasound. The equation for the evolution reads~\cite{Beard-IF11,Kuchment-MLLE12,LiWa-PMB09}:
\begin{equation}\label{EQ:Wave}
	\begin{array}{rcll}
		\dfrac{1}{c^2(\bx)}\dfrac{\partial^2 p}{\partial t^2} -\Delta p &=& 0, 
		&\text{in}\ \bbR_+ \times \Omega \\
		p(t,\bx)= H(\bx),\ \  
		\dfrac{\partial p}{\partial t}(t,\bx) &=& 0,&\mbox{in}\ \{0\}\times\Omega\\
		\bn\cdot \nabla p(t, \bx) &=& 0, & \mbox{on}\ \bbR_+\times\partial\Omega
	\end{array}
\end{equation}
where $p(t,\bx)$ is the pressure field, and $c(\bx)$ is the ultrasound wave speed. Note that instead of posing the wave equation in $\bbR^3$ as often done in the literature, we write it in a bounded domain with the Neumann boundary condition that mimic the effect of a reflective medium surface (coming, for instance, from the device that holds a piece of tissue to be imaged)~\cite{CoArBe-IP07}. This is, again, not technically essential, but only done to avoid using absorbing boundary conditions in the numerical simulations in Section~\ref{SEC:Num}.

The data measured in PAT are the ultrasound signals on the surface of the medium for a long enough time period $T$, that is, $p(t, \bx)_{|(0, T)\times\partial\Omega}$. From the measured data, we attempt to infer information on the optical properties of the underlying medium, for instance, the coefficients $\sigma_a$, $\sigma_s$ and $\Xi$. This inverse problem has been extensively investigated in the past decade, from mathematical, computational as well as practical perspectives; see for instance~\cite{AgKuKu-PIS09,AkBeDaElLiMi-JIIP17,AmBrJuWa-LNM12,BaBaVaRo-JOSA08,BuMaHaPa-PRE07,CoArBe-IP07,CoArKoBe-AO06,CoBe-JOSA09,FiHaRa-SIAM07,Haltmeier-SIAM11,HaScBuPa-IP04,HaScSc-M2AS05,HrKuNg-IP08,KuKu-EJAM08,Kunyansky-IP07,LaCoZhBe-AO10,Nguyen-IPI09,PaNuBu-PMB09,PaSc-IP07,QiStUhZh-SIAM11,ReZh-SIAM18,RiNt-PRE05, StUh-IP09,TaLi-OE10,TrZhCo-IP10,XuWaAmKu-MP04,YuWaJi-OE07,Zemp-AO10} and references therein.

Due to the fact that the radiative transport model~\eqref{EQ:RTE} is computationally expensive to solve, simplified mathematical models are often preferred as the model for light propagation in PAT. The diffusion approximation to RTE, see~\eqref{EQ:Diff}, is the most commonly-used replacement model~\cite{BaUh-IP10,BaRe-IP11,Zemp-AO10}. While the diffusion approximation is much simpler for mathematical analysis and computational solution, it often suffers in terms of accuracy, especially in regions close to the source locations, a phenomenon that has been addressed extensively in the literature of optical tomography~\cite{Arridge-IP99,TaVaKoArKa-PMB05}, as well as PAT~\cite{TaCoKaAr-IP12}. In this work, we study the PAT inverse problem with the simplified $P_2$ equations~\cite{AdLa-PNE02,LaThKlSeGo-JCP02}, a more sophisticated approximation than the classical diffusion model to the radiative transport equation, as the model of light propagation. We show numerically that image reconstructions based on the simplified $P_2$ model, while computationally less expensive than RTE-based reconstructions, can be more accurate than those based on the classical diffusion model under right circumstances.

We make the following general assumptions in the rest of the paper: 

($\cA$-i) the domain $\Omega$ is simply-connected with smooth boundary $\partial\Omega$; ($\cA$-ii) the physical coefficients $(c, \Xi, \sigma_a, \sigma_s)$ are positive and bounded in the sense that $0<\underline{\alpha} \le c, \Xi, \sigma_a, \sigma_s \le \overline{\alpha} <\infty$ for some positive constants $\underline{\alpha}$ and $\overline{\alpha}$; ($\cA$-iii) the ultrasound speed function $c(\bx)$ and the Gr\"uneisen coefficient $\Xi(\bx)$ are smooth in $\Omega$; and ($\cA$-iv) the values of the coefficients are known on the boundary $\partial\Omega$.

Let us remark that the only non-trivial assumption here is ($\cA$-iv). It has been shown that~\cite{ReGaZh-SIAM13}, without this assumption, mainly the part that ${\sigma_s}_{|\partial\Omega}$ is known, the quantitative reconstruction in PAT is not possible due to non-uniqueness. Additional optical current data on $\partial\Omega$ can help recover the uniqueness of quantitative reconstructions~\cite{ReGaZh-SIAM13}, but will make the current paper un-necessarily complicated. We therefore make assumption ($\cA$-iv) to simplify the overall setup to focus on the main point of our work.

The rest of the paper is structured as follows. We first review briefly in Section~\ref{SEC:Model} the simplified $P_2$ approximation to the transport equation. We then present in Section~\ref{SEC:Alg} the main computational reconstruction algorithm we use for our numerical studies. In Section~\ref{SEC:Theory} we study the quantitative step of the inverse problems based on the simplified $P_2$ equations. Detailed numerical simulation results based on synthetic data are presented in Section~\ref{SEC:Num} to demonstrate the feasibility of our approach. Concluding remarks are offered in Section~\ref{SEC:Concl}.

\section{The simplified $P_2$ approximation}
\label{SEC:Model}

We now introduce the simplified $P_2$ approximation to the radiative transport equation in diffusive regimes. To limit the length of the paper, we will only present the simplified $P_2$ equations. We refer interested readers to~\cite{FrKlLaYa-JCP07,KlLa-JCP06,LaThKlSeGo-JCP02,WrScAr-MST07} and the references therein for details on the derivation and numerical validation of the simplified $P_2$ equations. 

We first define the following sequence of total absorption coefficients:
\begin{equation}\label{EQ:Total Abso}
\sigma_{an}(\bx)=\sigma_a(\bx)+(1-g^n)\sigma_s(\bx),\qquad n\ge 0 .
\end{equation}
We also define the diffusion coefficients:
\begin{equation}\label{EQ:D Coeff}
D(\bx)=\dfrac{1}{3\sigma_{a1}(\bx)},\qquad \mbox{and}\qquad \wt D(\bx)=\dfrac{1}{7\sigma_{a3}(\bx)}.
\end{equation}
The simplified $P_2$ equations, together with its boundary conditions, take the following form~\cite{KlLa-JCP06,LaThKlSeGo-JCP02,WrScAr-MST07}:
\begin{equation}\label{EQ:SP2}
\begin{array}{rcll}
-\nabla\cdot D \nabla \phi_1(\bx) +\sigma_a \phi_1(\bx)-\frac{2}{3}\sigma_a \phi_2(\bx) &=& 0, & \mbox{in}\ \Omega\\
-\nabla\cdot \wt D \nabla \phi_2(\bx) +(\frac{5}{9}\sigma_{a2}+\frac{4}{9}\sigma_a) \phi_2(\bx)-\frac{2}{3}\sigma_a \phi_1(\bx) & = & 0, & \mbox{in}\ \Omega\\
\bn\cdot D\nabla\phi_1 +\frac{1}{2}\phi_1-\frac{1}{8}\phi_2 &=& \frac{1}{2} f(\bx), & \mbox{on}\ \partial\Omega\\
\bn\cdot \wt D\nabla\phi_2 +\frac{7}{24}\phi_2-\frac{1}{8}\phi_1 & = & -\frac{1}{8} f(\bx), & \mbox{on}\ \partial\Omega
\end{array}
\end{equation}
where $\phi_1$ and $\phi_2$ are the first two composites, i.e. linear combinations of, Legendre moments of the transport solution $u(\bx,\bv)$; see~\cite{FrKlLaYa-JCP07,KlLa-JCP06,LaThKlSeGo-JCP02,WrScAr-MST07} for more details.

The initial pressure field generated due to the photoacoustic effect, corresponding to ~\eqref{EQ:Int Data RTE}, can be written as follows in the simplified $P_2$ approximation:
\begin{equation}\label{EQ:Int Data SP2}
H(\bx)=H^{P_2}(\bx) \equiv \Xi(\bx) \sigma_a(\bx) (\phi_1-\dfrac{2}{3}\phi_2) .
\end{equation}

To simplify the analysis, we assume in the following that the absorption coefficient $\sigma_a$ is very small compared to the effective scattering coefficient $(1-g)\sigma_s$, i.e. $\sigma_a\ll (1-g)\sigma_s$. Under this assumption, we can neglect the factor $\sigma_a$ in the definitions of the diffusion coefficients $(D, \wt D)$ and the absorption coefficients $\sigma_{an}$ such that
\begin{equation}\label{EQ:Coeff Ass}
\begin{array}{c}
\sigma_{a0}=\sigma_a,\qquad\ \ \ \sigma_{an} =(1-g^n)\sigma_s,\ n\ge 1,\qquad\ \ \ D(\bx)=\dfrac{1}{3(1-g)\sigma_s(\bx)},\\
\wt D(\bx)=\dfrac{3}{7(1+g+g^2)}D(\bx), \qquad \ \ \sigma_{a2}=\dfrac{1+g}{3}\dfrac{1}{D}.
\end{array}
\end{equation}
We note that this assumption is generally required for any diffusive approximations to the radiative transport equation. Moreover, in applications of PAT for biological tissues, $\frac{\sigma_a}{(1-g)\sigma_s}$ is on the order of $10^{-2}\ll 1$~\cite{Jacques-PMB13}.

With the simplification in~\eqref{EQ:Coeff Ass}, the simplified $P_2$ system~\eqref{EQ:SP2} reduces to the following form:
\begin{equation}\label{EQ:SP2 simp}
\begin{array}{rcll}
-\nabla\cdot D \nabla \phi_1 +\sigma_a \phi_1 - \frac{2}{3}\sigma_a \phi_2 &=& 0, & \mbox{in}\ \Omega\\
-\nabla\cdot D \nabla \phi_2 + (\frac{5}{9\kappa'}\frac{1}{D}+\frac{4}{9\kappa}\sigma_a) \phi_2 -\frac{2}{3\kappa}\sigma_a \phi_1 &=& 0, & \mbox{in}\ \Omega\\
\bn\cdot D \nabla \phi_1 + \frac{1}{2}\phi_1 - \frac{1}{8} \phi_2 & = & \frac{1}{2}f(\bx), & \mbox{on}\ \partial\Omega\\
\bn\cdot D \nabla\phi_2 +\frac{7}{24\kappa}\phi_2 - \frac{1}{8\kappa}\phi_1 & = & -\frac{1}{8\kappa} f(\bx), & \mbox{on}\ \partial\Omega
\end{array}
\end{equation}
where
\[
 	\kappa = \dfrac{3}{7(1+g+g^2)}, \qquad \mbox{and}\qquad \kappa' = \dfrac{3}{1+g} \kappa.
\]
The corresponding initial pressure field $H^{P_2}$ in~\eqref{EQ:Int Data SP2} remains in the same form.

It is sometimes more convenient to rewrite the simplified $P_2$ system~\eqref{EQ:SP2 simp} in a new pair of variables $(\varphi_1, \varphi_2)$: $\varphi_1=\phi_1-\dfrac{2}{3}\phi_2$ and $\varphi_2=\phi_2$. In this case, we have
\begin{equation}\label{EQ:SP2 simp V2}
\begin{array}{rcll}
-\nabla\cdot D \nabla \varphi_1 +(1+\frac{4}{9\kappa})\sigma_a \varphi_1 - \frac{10}{27\kappa'}\frac{1}{D} \varphi_2 &=& 0, & \mbox{in}\ \Omega\\
-\nabla\cdot D \nabla \varphi_2 + \frac{5}{9\kappa'}\frac{1}{D} \varphi_2 -\frac{2}{3\kappa}\sigma_a \varphi_1 &=& 0, & \mbox{in}\ \Omega\\
\bn\cdot D \nabla \varphi_1 +\frac{6\kappa+1}{12\kappa} \varphi_1 +\frac{15\kappa-10}{72\kappa} \varphi_2 & = & \frac{6\kappa+1}{12\kappa}f(\bx), & \mbox{on}\ \partial\Omega\\
\bn\cdot D \nabla \varphi_2 +\frac{5}{24\kappa} \varphi_2 - \frac{1}{8\kappa} \varphi_1 & = & -\frac{1}{8\kappa} f(\bx), & \mbox{on}\ \partial\Omega
\end{array}
\end{equation}
The corresponding initial pressure field can now be written as
\begin{equation}\label{EQ:Int Data SP2 V2}
H(\bx) = H^{P_2'}(\bx) \equiv \Xi(\bx) \sigma_a(\bx) \varphi_1(\bx).
\end{equation}

To recover the classical diffusion approximation to the radiative transport equation, we drop the terms involve gradient of $u_2$ from the simplified $P_2$ system~\eqref{EQ:SP2 simp V2}. This leads to the simplified $P_1$ approximation:
\begin{equation}\label{EQ:Diff}
\begin{array}{rcll}
-\nabla\cdot D(\bx) \nabla \varphi(\bx) +\sigma_a(\bx) \varphi(\bx)& = & 0, & \mbox{in}\ \Omega\\
\bn \cdot D\nabla \varphi + \frac{1}{2} \varphi & = & \frac{1}{2} f(\bx), & \mbox{on}\ \partial\Omega
\end{array}
\end{equation}
The corresponding initial pressure field generated in this case takes the form:
\begin{equation}\label{EQ:Int Data Diff}
H(\bx)=H^{diff}(\bx) \equiv \Xi(\bx)\sigma_a(\bx) \varphi(\bx).
\end{equation}
Note that $\varphi_1$ in~\eqref{EQ:SP2 simp V2} and $\varphi$ in~\eqref{EQ:Diff} are different approximations to the same physical quantity, the photon density. We use different symbols here for the two to avoid unnecessary confusion. Moreover, the boundary condition in the diffusion approximation~\eqref{EQ:Diff}, i.e. simplified $P_1$ approximation, is slightly different from the boundary condition one can obtain from a detailed boundary layer analysis in the classical $P_1$ approximations~\cite{DaLi-Book93-6}. For the sake of consistency, we use the boundary condition in~\eqref{EQ:Diff}.

It is well-known that under reasonable regularity assumptions on the optical coefficients $(\sigma_a, \sigma_s)$ and the boundary illumination source $f$, the radiative transport equation~\eqref{EQ:RTE} and its diffusion approximation~\eqref{EQ:Diff} are well-posed in appropriate function spaces~\cite{DaLi-Book93-6}. Therefore the initial pressure fields $H^{rte}$ and $H^{diff}$ are well-defined quantities. The simplified $P_2$ system is also well-posed under similar assumptions. In fact, let $\boldsymbol \phi=(\phi_1, \phi_2)$ and $\boldsymbol\varphi=(\varphi_1, \varphi_2)$, we can verify that the bilinear form associated with~\eqref{EQ:SP2 simp},
\begin{multline*}
	\cB(\boldsymbol \phi, \boldsymbol \varphi) = \int_\Omega \left[D\nabla\phi_1\cdot\nabla\varphi_1+\kappa D\nabla\phi_2\cdot\nabla\varphi_2\right] d\bx + \int_\Omega \left[ \sigma_a \phi_1\varphi_1+(\frac{5(1+g)}{27D}+\dfrac{4}{9}\sigma_a)\phi_2\varphi_2\right] d\bx\\
	-\int_\Omega \dfrac{2}{3}\sigma_a \left[ \phi_2\varphi_1+\phi_1\varphi_2 \right] d\bx +\int_{\partial\Omega} \left[\dfrac{1}{2}\phi_1\varphi_1+\dfrac{7}{24}\phi_2\varphi_2-\dfrac{1}{8}(\phi_1\varphi_2+\phi_2\varphi_1)\right] dS(\bx),
\end{multline*}
is coercive in the classical sense, due to the assumptions on the coefficients and the fact that $2\sqrt{\sigma_a}\sqrt{\frac{5(1+g)}{27D}+\dfrac{4}{9}\sigma_a}> \dfrac{4}{3}\sigma_a$ and $2 \sqrt{1/2}\sqrt{7/24}>2/8$. The Lax-Milgram theorem and standard theory of elliptic systems~\cite{GiTr-Book00,McLean-Book00} can then be applied to the simplified $P_2$ system~\eqref{EQ:SP2 simp}. Therefore, $H^{P_2}$ is also a well-defined quantity.

\section{Numerical reconstruction algorithms}
\label{SEC:Alg}

In this section, we implement a standard optimal control-based numerical image reconstruction algorithm for PAT with the simplified $P_2$ equations~\eqref{EQ:SP2 simp} and the classical diffusion equation~\eqref{EQ:Diff} as the models of light propagation. Since the data we have are not enough to uniquely determine all three coefficients $(\Xi, \sigma_a, \sigma_s)$ simultaneously, as indicated in both the classical diffusion and radiative transport regimes~\cite{BaRe-IP11,MaRe-CMS14}, we consider here only the reconstruction of two coefficients. We provide the details for the case of reconstructing $(\sigma_a, \sigma_s)$. Reconstructing other pairs, such as $(\Xi, \sigma_a)$, can be done very similarly.

Let us assume that we have data collected from $J\ge 2$ different illuminations $\{f_j\}_{j=1}^J$. We denote by $\{p_j^*\}_{j=1}^J$ the measured ultrasound data. We solve the reconstruction problem by searching for the coefficient pair $(\sigma_a, \sigma_s)$ that minimize the mismatch between ultrasound data predicted by the mathematical models and the measurements. More precisely, we solve the minimization problem
\begin{equation}\label{EQ:Min}
\min_{\sigma_a, \sigma_s}\cO(\sigma_a, \sigma_s),\ \ \mbox{subject to}, \ \ \mathfrak{l}_a \le \sigma_a \le \mathfrak{u}_a, \ \ \mathfrak{l}_s \le \sigma_s \le \mathfrak{u}_s
\end{equation}
where the linear bounds $\{\mathfrak{l}_a, \mathfrak{u}_a, \mathfrak{l}_s, \mathfrak{u}_s\}$ are selected in a case by case manner, as discussed further in the numerical simulations in Section~\ref{SEC:Num}. The data mismatch functional is defined as
\begin{equation}\label{EQ:Obj}
\cO(\sigma_a, \sigma_s) =\dfrac{1}{2} \sum_{j=1}^J \int_0^T \int_{\partial\Omega}(p_j^{\cM} -p_j^*)^2 dS(\bx)dt + \alpha\cR(\sigma_a)+\beta\cR(\sigma_s),
\end{equation}
where $p_j^\cM$ is the ultrasound signal predicted using the light propagation model $\cM \in\{P_2, \mbox{diff}\}$ with the coefficient $(\sigma_a, \sigma_s)$. The parameters $\alpha$ and $\beta$ are used to control the strengths of the regularization mechanism encoded in the functional $\cR$. The regularization functional we select here is of Tikhonov type~\cite{EnHaNe-Book96}, based on the $L^2$ norm of the gradients,
\begin{equation}\label{EQ:Reg}
\cR(\sigma_a)=\dfrac{1}{2} \|\nabla \sigma_a\|_{(L^2(\Omega))^3}^2\equiv \dfrac{1}{2}\int_\Omega |\nabla \sigma_a|^2 d\bx.
\end{equation}
While we choose the same regularization functional for both $\sigma_a$ and $\sigma_s$ for convenience, it is not required. Other types of regularization can also be considered, but will not be discussed in this paper. Interested readers are referred to~\cite{EnHaNe-Book96,ScGrGrHaLe-Book09} for classical treatments of regularization theory and its applications.

To solve the minimization problem~\eqref{EQ:Min}, we use the SNOPT algorithm developed in~\cite{GiMuSa-SIAM05}. In a nutshell, this is a sparse sequential quadratic programming (SQP) algorithm where the Hessian of the Lagrangian is approximated by a limited-memory BFGS strategy. This is a mature optimization technique, therefore we will not describe it in detail here. Our main objective is to supply the optimization algorithm with a subroutine to evaluate the mismatch functional $\cO$ and its derivatives with respect to the optical properties $\sigma_a$ and $\sigma_s$. The derivatives can be computed in a standard manner using the adjoint state method. We summarize the calculations of the derivatives in the following lemma.
\begin{lemma}\label{LMMA:Derivative}
	Let $\Omega$, $c$ and $\Xi$ satisfy the assumptions in ($\cA$-i)-($\cA$-iv) and assume that the assumptions in~\eqref{EQ:Coeff Ass} hold as well. For each $1\le j\le J$, let $f_j(\bx)$ be the restriction of a $\cC^1$ function to $\partial\Omega$. Then the predicted ultrasound data for illumination source $f_j$ using specified optical model $\cM$, ${p_j^\cM}_{|(0, T]\times\partial\Omega}$, viewed as the map:
	\begin{equation}
	p_j^\cM:
	\begin{array}{ccl}
	(\sigma_a, \sigma_s) & \mapsto& {p_j^\cM}_{|(0,T]\times\partial\Omega}\\
	\cC^1(\bar\Omega)\times \cC^1(\bar \Omega) &\mapsto & \cH^{1/2}((0,T]\times\partial\Omega)
	\end{array}
	\end{equation}
	is Fr\'echet differentiable at any $(\sigma_a, \sigma_s)\in \cC^1(\bar\Omega)\times \cC^1(\bar\Omega)$ that satisfies the assumptions in ($\cA$-ii). Moreover, the mismatch functional $\cO(\sigma_a, \sigma_s): \cC^1(\bar\Omega)\times \cC^1(\bar \Omega) \mapsto \bbR_+$ given by \eqref{EQ:Obj} is Fr\'echet differentiable and its derivatives at $(\sigma_a,\sigma_s)$ in the directions $\delta\sigma_a\in \cC_0^1(\bar\Omega)$ and $\delta\sigma_s\in \cC_0^1(\Omega)$ (such that $\sigma_a+\delta\sigma_a$ and $\sigma_s+\delta\sigma_s$ satisfy ($\cA$-ii)) are given as follows. Let $q_j(t,\bx)$, $1\le j\le J$, be the solution to the adjoint wave equation:
	\begin{equation}\label{EQ:Wave Adj}
	\begin{array}{rcll}
	\dfrac{1}{c^2(\bx)}\dfrac{\partial^2 q_j}{\partial t^2} -\Delta q_j &=& 0, 
	&\text{in}\ (0, T)\times \Omega \\
	q_j(t,\bx)= 0,\ \  
	\dfrac{\partial q_j}{\partial t}(t,\bx) &=& 0,&\mbox{in}\ \{T\}\times\Omega\\
	\bn\cdot \nabla q_j(t, \bx) &=& p_j^\cM-p_j^*, & \mbox{on}\ (0, T)\times\partial\Omega
	\end{array}
	\end{equation}
	(i) If the optical model $\cM$ is the simplified $P_2$ model~\eqref{EQ:SP2 simp}, then
	\begin{eqnarray}
	\nonumber
	\cO'(\sigma_a, \sigma_s)[\delta\sigma_a] &= &\sum_{j=1}^J\int_\Omega (\phi_{1,j}-\dfrac{2}{3}\phi_{2,j}) \left\{\dfrac{\Xi}{c^2} \dfrac{\partial q_j}{\partial t}(0, \bx) + (\psi_{1,j}-\dfrac{2}{3\kappa}\psi_{2,j})\right\}\delta\sigma_a(\bx) d\bx\\ 
	\label{EQ:Obj Deriv SP2 a} & & + \alpha\cR'(\sigma_a)[\delta\sigma_a],\\
	\nonumber
	\cO'(\sigma_a, \sigma_s)[\delta\sigma_s] &=&\sum_{j=1}^J\int_\Omega\left\{ \dfrac{\nabla\psi_{1,j}\cdot\nabla\psi_{1,j}+\nabla\phi_{2,j}\cdot\nabla\psi_{2,j}}{3(1-g)\sigma_s^2} - \dfrac{5(1-g^2)}{9\kappa}\phi_{2,j}\psi_{2,j} \right\} \delta\sigma_s(\bx) d\bx \\ 
	\label{EQ:Obj Deriv SP2 b} & & +\beta\cR'(\sigma_s)[\delta\sigma_s],
	\end{eqnarray}
	where $(\phi_{1,j}, \phi_{2,j})$ solves~\eqref{EQ:SP2 simp} with source $f_j$, $1\le j\le J$, while $(\psi_{1,j}, \psi_{2,j})$ solves the adjoint diffusion system:
	\begin{equation}\label{EQ:SP2 simp Adj}
	\begin{array}{rcll}
	-\nabla\cdot D \nabla \psi_{1,j} +\sigma_a (\psi_{1,j} - \dfrac{2}{3\kappa}\psi_{2,j}) &=& - \dfrac{\Xi}{c^2}\sigma_a\partial_t q_j(0,\bx), & \mbox{in}\ \Omega\\
	-\nabla\cdot D \nabla \psi_{2,j} + (\dfrac{5}{9\kappa'}\dfrac{1}{D}+\dfrac{4}{9\kappa}\sigma_a) \psi_{2,j} -\dfrac{2}{3}\sigma_a \psi_{1,j} &=& \dfrac{2}{3}\dfrac{\Xi}{c^2}\sigma_a\partial_t q_j(0,\bx), & \mbox{in}\ \Omega\\
	\bn\cdot D \nabla \psi_{1,j} + \dfrac{1}{2}\psi_{1,j} - \dfrac{1}{8\kappa} \psi_{2,j} & = & 0, & \mbox{on}\ \partial\Omega\\
	\bn\cdot D \nabla\psi_{2,j} +\dfrac{7}{24\kappa}\psi_{2,j} - \dfrac{1}{8}\psi_{1,j} & = & 0, & \mbox{on}\ \partial\Omega
	\end{array}
	\end{equation}
	(ii) If the optical model $\cM$ is the classical diffusion model~\eqref{EQ:Diff}, then
	\begin{eqnarray}
	\nonumber \cO'(\sigma_a, \sigma_s)[\delta\sigma_a] &=& \sum_{j=1}^J\int_\Omega \varphi_j \left\{\dfrac{\Xi}{c^2} \dfrac{\partial q_j}{\partial t}(0, \bx)+\eta_j\right\}\delta\sigma_a(\bx) d\bx\\
	\label{EQ:Obj Deriv P1 a} & & {\hskip 3cm} +\alpha\cR'(\sigma_a)[\delta\sigma_a], \\
	\label{EQ:Obj Deriv P1 b}
	\cO'(\sigma_a, \sigma_s)[\delta\sigma_s] &=&\sum_{j=1}^J\int_\Omega \dfrac{\nabla \varphi_j\cdot\nabla \eta_j}{3(1-g)\sigma_s^2} \delta \sigma_s(\bx) d\bx +\beta\cR'(\sigma_s)[\delta\sigma_s],
	\end{eqnarray}
	where $\varphi_j$ solves~\eqref{EQ:Diff} with source $f_j$, $1\le j\le J$, and $\eta_j$ solves
	\begin{equation}\label{EQ:Diff Adj}
	\begin{array}{rcll}
	-\nabla\cdot D(\bx) \nabla \eta_j(\bx) +\sigma_a(\bx) \eta_j(\bx)& = & - \dfrac{\Xi}{c^2}\sigma_a\partial_t q_j(0,\bx), & \mbox{in}\ \Omega\\
	\bn \cdot D\nabla \eta_j + \dfrac{1}{2} \eta_j & = &0, & \mbox{on}\ \partial\Omega
	\end{array}
	\end{equation}
\end{lemma}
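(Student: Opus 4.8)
The plan is to realize the data map as a composition of simpler maps, establish Fréchet differentiability stage by stage, and then extract the explicit formulas by an adjoint-state calculation that here requires \emph{two} nested adjoints — one for the acoustic wave equation~\eqref{EQ:Wave} and one for the elliptic light-transport system. Concretely I would factor
\[
(\sigma_a,\sigma_s)\ \longmapsto\ (\phi_{1,j},\phi_{2,j})\ \longmapsto\ H_j\ \longmapsto\ p_j\ \longmapsto\ {p_j}_{|(0,T]\times\partial\Omega}.
\]
The elliptic solution operator $\cS:(\sigma_a,\sigma_s)\mapsto(\phi_{1,j},\phi_{2,j})$ of~\eqref{EQ:SP2 simp} depends on the coefficients through expressions that are smooth (affine in $\sigma_a$, and smooth in $\sigma_s$ via $D=1/(3(1-g)\sigma_s)$); since the system is boundedly invertible on $(H^1(\Omega))^2$ — its well-posedness is already recorded in the excerpt — the implicit function theorem gives that $\cS$ is Fréchet differentiable, with $(\delta\phi_{1,j},\delta\phi_{2,j}):=\cS'(\sigma_a,\sigma_s)[\delta\sigma_a,\delta\sigma_s]$ solving the linearization of~\eqref{EQ:SP2 simp}. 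The map $(\phi_1,\phi_2,\sigma_a)\mapsto H$ in~\eqref{EQ:Int Data SP2} is a smooth pointwise algebraic expression; the wave operator $\cW:H\mapsto p$ is \emph{linear} and bounded (energy estimates give $p\in C([0,T];H^1)\cap C^1([0,T];L^2)$), hence equals its own derivative; and the boundary trace is linear and bounded into $\cH^{1/2}((0,T]\times\partial\Omega)$. Composing by the chain rule yields differentiability of the forward map, and then of the quadratic-plus-$\cR$ functional $\cO$ in~\eqref{EQ:Obj}. The $\cC^1$-regularity of $f_j$ together with ($\cA$-iii) supplies the elliptic regularity needed for $H$ to be an admissible wave initial datum, while the compact support of the admissible directions $\delta\sigma_a,\delta\sigma_s$ (encoding ($\cA$-iv)) forces $\delta D=0$ on $\partial\Omega$, so the linearized boundary conditions stay homogeneous Robin conditions of the same type.

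\textbf{First adjoint (wave reduction).} Differentiating the data term $\tfrac12\int_0^T\!\int_{\partial\Omega}(p_j^\cM-p_j^*)^2$ gives $\sum_j\int_0^T\!\int_{\partial\Omega}(p_j^\cM-p_j^*)\,\delta p_j$, where $\delta p_j=\cW(\delta H_j)$ solves~\eqref{EQ:Wave} with datum $\delta H_j$ and homogeneous Neumann data. Testing this linearized wave equation against the adjoint wave solution $q_j$ of~\eqref{EQ:Wave Adj} and integrating by parts twice in $t$ and twice in $\bx$, the interior terms cancel (both solve the wave equation), the terminal-time terms vanish since $q_j(T)=\partial_t q_j(T)=0$, and the boundary integral reproduces $\int_0^T\!\int_{\partial\Omega}(p_j^\cM-p_j^*)\delta p_j$. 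What survives is a single volume term built from $\tfrac{1}{c^2}\,\delta H_j\,\partial_t q_j(0,\bx)$, with sign dictated by the orientation of $q_j$. Since $\delta H_j=\Xi\big[(\phi_{1,j}-\tfrac23\phi_{2,j})\delta\sigma_a+\sigma_a(\delta\phi_{1,j}-\tfrac23\delta\phi_{2,j})\big]$, the first piece already reproduces the explicit $\tfrac{\Xi}{c^2}\partial_t q_j(0)$ contribution of~\eqref{EQ:Obj Deriv SP2 a}, while the second piece still carries the linearized fields $(\delta\phi_{1,j},\delta\phi_{2,j})$.

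\textbf{Second adjoint and assembly.} To eliminate $(\delta\phi_{1,j},\delta\phi_{2,j})$ I would pair the linearized $P_2$ system with $(\psi_{1,j},\psi_{2,j})$ and the adjoint system~\eqref{EQ:SP2 simp Adj} with $(\delta\phi_{1,j},\delta\phi_{2,j})$; the symmetric principal and zeroth-order parts cancel exactly because the adjoint Robin boundary conditions are those of~\eqref{EQ:SP2 simp Adj}, and the adjoint source $\mp\tfrac{\Xi}{c^2}\sigma_a\partial_t q_j(0)$ is chosen precisely so that the $(\delta\phi_{1,j},\delta\phi_{2,j})$-pairing equals the leftover $\tfrac{\Xi\sigma_a}{c^2}\partial_t q_j(0)(\delta\phi_{1,j}-\tfrac23\delta\phi_{2,j})$ term. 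The right-hand pairing then reduces to the forcing terms of the linearized forward system tested against $(\psi_{1,j},\psi_{2,j})$. The $\sigma_a$-variations of the zeroth-order couplings in both equations combine — by the identity $(\phi_1-\tfrac23\phi_2)(\psi_1-\tfrac{2}{3\kappa}\psi_2)=\phi_1\psi_1-\tfrac23\phi_2\psi_1+\tfrac{4}{9\kappa}\phi_2\psi_2-\tfrac{2}{3\kappa}\phi_1\psi_2$ — into the factor $(\phi_{1,j}-\tfrac23\phi_{2,j})(\psi_{1,j}-\tfrac{2}{3\kappa}\psi_{2,j})$ of~\eqref{EQ:Obj Deriv SP2 a}, while the $\sigma_s$-variations, entering through $\delta D=-D\,\delta\sigma_s/\sigma_s$ in the principal parts and through $\delta(1/D)=3(1-g)\delta\sigma_s$ in the zeroth-order coefficient of the second equation, produce (after integrating the $\delta D\,\nabla\phi$ terms by parts) exactly the gradient products $\nabla\phi_{\cdot,j}\cdot\nabla\psi_{\cdot,j}/(3(1-g)\sigma_s^2)$ and the term $\tfrac{5(1-g^2)}{9\kappa}\phi_{2,j}\psi_{2,j}$ of~\eqref{EQ:Obj Deriv SP2 b}. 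Adding $\alpha\cR'(\sigma_a)$ and $\beta\cR'(\sigma_s)$ completes~\eqref{EQ:Obj Deriv SP2 a}--\eqref{EQ:Obj Deriv SP2 b}; part (ii) is the identical argument with the single equation~\eqref{EQ:Diff}, its adjoint~\eqref{EQ:Diff Adj}, and $H^{diff}$ in place of the $P_2$ data.

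\textbf{Main obstacle.} I expect two delicate points. First, upgrading Gâteaux to \emph{Fréchet} differentiability into $\cH^{1/2}((0,T]\times\partial\Omega)$ needs the boundary (``hidden'') regularity of the wave equation: the map $H\mapsto p_{|\partial\Omega}$ must be bounded into $\cH^{1/2}$, which does not follow from the interior energy estimate alone and requires the sharp trace theory for second-order hyperbolic problems. Second, and more laborious, is the bookkeeping of \emph{all} boundary contributions in the double integration by parts: it is exactly the demand that no uncontrolled boundary terms remain that pins down the specific Robin adjoint conditions in~\eqref{EQ:SP2 simp Adj} and fixes every numerical coefficient and sign in~\eqref{EQ:Obj Deriv SP2 a}--\eqref{EQ:Obj Deriv SP2 b}. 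Each step is routine once organized, but the coupled sign/coefficient tracking across the two adjoints is where the real care lies.
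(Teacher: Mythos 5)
Your proposal is correct and follows essentially the same route as the paper: factor the forward map through the elliptic solution operator, the pressure field $H$, and the wave/trace maps; establish differentiability by the chain rule; then eliminate the linearized wave and elliptic fields by the two nested adjoint pairings~\eqref{EQ:Wave Adj} and~\eqref{EQ:SP2 simp Adj}, exactly as the paper does. The only (minor) divergence is that you invoke the implicit function theorem for the Fr\'echet differentiability of the elliptic solution operator, whereas the paper proves it directly by constructing the candidate derivative $(\wt\phi_{1,j},\wt\phi_{2,j})$ and showing via elliptic stability estimates that the remainder is $O(\|\delta\sigma_a\|_{\cC_0^1}^2)$ — two standard and interchangeable ways of packaging the same estimate.
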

\begin{proof}
	The result on the classical diffusion model~\eqref{EQ:Diff} is proved in~\cite{DiReVa-IP15}. We focus here on the simplified $P_2$ model. Under the assumptions stated in the lemma, standard elliptic theory in~\cite{GiTr-Book00,McLean-Book00} implies that~\eqref{EQ:SP2 simp} admits a unique solution pair $(\phi_{1,j}, \phi_{2,j})\in\cH^2(\Omega)\times\cH^2(\Omega)$ for source $f_j$. This, together with the assumption on $\sigma_a$, gives that the initial pressure field $H_j^{\cM}\in \cH^1(\Omega)$, which then ensures that the wave equation~\eqref{EQ:Wave} admits a unique solution $p_j^\cM\in\cH^1((0, T]\times\Omega)$~\cite{DiDoNaPaSi-SIAM02,Isakov-Book02,Rakesh-CPDE88}. Differentiability of ${p_j^\cM}_{|(0, T]\times\partial\Omega}$ with respect to the initial pressure field $H_j^{\cM}$ then follows from the linearity of the map $p_j^{\cM}: \cH^1(\Omega)\to \cH^{1/2}((0,T]\times\partial\Omega)$~\cite{DiDoNaPaSi-SIAM02,Isakov-Book02}. It remains, following the chain rule, to show that $H_j^{\cM}: \cC^1(\bar\Omega)\times\cC^1(\bar\Omega) \mapsto \cH^1(\Omega)$ is Fr\'echet differentiable with respect to $\sigma_a$ and $\sigma_s$. We now prove this for the derivative with respect to $\sigma_a$. The other derivative follows from similar calculations. 
	
	Let $\big(\phi_{1,j}^{(\sigma_a+\delta\sigma_a, \sigma_s)}, \phi_{2,j}^{(\sigma_a+\delta\sigma_a, \sigma_s)}\big)$ and $\big(\phi_{1,j}^{(\sigma_a, \sigma_s)}, \phi_{2,j}^{(\sigma_a, \sigma_s)}\big)$ be the solution to the simplified $P_2$ model~\eqref{EQ:SP2 simp} with coefficients $(\sigma_a+\delta\sigma_a, \sigma_s)$ and $(\sigma_a, \sigma_s)$ respectively, for source $f_j$. Let $(\wt\phi_{1,j}, \wt\phi_{2,j})$ be the solution to,
	\begin{equation}\label{EQ:SP2 simp Deriv}
	\begin{array}{rcll}
	-\nabla\cdot D \nabla \wt\phi_{1,j} +\sigma_a (\wt\phi_{1,j} - \frac{2}{3}\wt\phi_{2,j}) &=& -\Delta\phi\delta\sigma_a, & \mbox{in}\ \Omega\\
	-\nabla\cdot D \nabla \wt\phi_{2,j} + \frac{5}{9\kappa'}\frac{1}{D} \wt\phi_{2,j} -\frac{2\sigma_a}{3\kappa} (\wt\phi_{1,j}-\frac{2}{3}\wt\phi_{2,j}) &=& \frac{2}{3\kappa}\Delta\phi\delta\sigma_a, & \mbox{in}\ \Omega\\
	\bn\cdot D \nabla \wt\phi_{1,j} + \frac{1}{2}\wt\phi_{1,j} - \frac{1}{8} \wt \phi_{2,j} & = & 0, & \mbox{on}\ \partial\Omega\\
	\bn\cdot D \nabla\wt\phi_{2,j} +\frac{7}{24\kappa}\wt\phi_{2,j} - \frac{1}{8\kappa}\wt\phi_{1,j} & = &0, & \mbox{on}\ \partial\Omega
	\end{array}
	\end{equation}
	where we  have used the notation $\Delta\phi=\phi_{1,j}^{(\sigma_a, \sigma_s)}-\frac{2}{3}\phi_{2,j}^{(\sigma_a, \sigma_s)}$.
	We define $\wh \phi_{i,j}=\phi_{i,j}^{(\sigma_a+\delta\sigma_a, \sigma_s)}-\phi_{i,j}^{(\sigma_a, \sigma_s)}$ and $\doubleline{\phi}_{i,j}=\wh\phi_{i,j}-\wt\phi_{i,j}$, $i=1,2$. It is straightforward to check that $(\wh\phi_{1,j}, \wh\phi_{2,j})$ solves
	\begin{equation}\label{EQ:W Diff}
	\begin{array}{rcll}
	-\nabla\cdot D \nabla \wh\phi_{1,j}+(\sigma_a+\delta\sigma_a) (\wh\phi_{1,j} - \frac{2}{3}\wh\phi_{2,j}) &=& -\Delta\phi \delta\sigma_a, & \mbox{in}\ \Omega\\
	-\nabla\cdot D \nabla \wh\phi_{2,j} + \frac{5}{9\kappa'}\frac{1}{D}\wh\phi_{2,j} -\frac{2(\sigma_a+\delta\sigma_a)}{3\kappa} (\wh\phi_{1,j}-\frac{2}{3}\wh\phi_{2,j}) &=& \frac{2}{3\kappa}\Delta\phi\delta\sigma_a, & \mbox{in}\ \Omega\\
	\bn\cdot D \nabla \wh\phi_{1,j} + \frac{1}{2}\wh\phi_{1,j} - \frac{1}{8} \wh \phi_{2,j} & = & 0, & \mbox{on}\ \partial\Omega\\
	\bn\cdot D \nabla\wh\phi_{2,j}+\frac{7}{24\kappa}\wh\phi_{2,j} - \frac{1}{8\kappa}\wh\phi_{1,j} & = &0, & \mbox{on}\ \partial\Omega
	\end{array}
	\end{equation}
	and $(\doubleline{\phi}_{1,j}, \doubleline{\phi}_{2,j})$ solves
	\begin{equation}\label{EQ:Z Diff}
	\begin{array}{rcll}
	-\nabla\cdot D \nabla\doubleline{\phi}_{1,j} + \sigma_a (\doubleline{\phi}_{1,j} - \frac{2}{3}\doubleline{\phi}_{2,j}) &=& - (\wh\phi_{1,j}-\frac{2}{3}\wh\phi_{2,j})\delta\sigma_a, & \mbox{in}\ \Omega\\
	-\nabla\cdot D \nabla \doubleline{\phi}_{2,j} + \frac{5}{9\kappa'}\frac{1}{D}\doubleline{\phi}_{2,j} -\frac{2}{3\kappa}\sigma_a (\doubleline{\phi}_{1,j}-\frac{2}{3}\doubleline{\phi}_{2,j}) &=&  \frac{2}{3\kappa}(\wh\phi_{1,j}-\frac{2}{3}\wh\phi_{2,j})\delta\sigma_a, & \mbox{in}\ \Omega\\
	\bn\cdot D \nabla \doubleline{\phi}_{1,j} + \frac{1}{2}\doubleline{\phi}_{1,j} - \frac{1}{8} \doubleline{\phi}_{2,j} & = & 0, & \mbox{on}\ \partial\Omega\\
	\bn\cdot D \nabla\doubleline{\phi}_{2,j} +\frac{7}{24\kappa}\doubleline{\phi}_{2,j} - \frac{1}{8\kappa}\doubleline{\phi}_{1,j} & = &0, & \mbox{on}\ \partial\Omega
	\end{array}
	\end{equation}
	Note that in the above derivations, we have used the assumption that the boundary value of the coefficient $\sigma_a$ is known, i.e. ${\delta\sigma_a}_{|\partial\Omega}=0$; see the assumption in ($\cA$-iv) and $\delta\sigma_a\in \cC_0^1(\Omega)$.
	
	We first observe from the simplified $P_2$ model~\eqref{EQ:SP2 simp}, following standard elliptic theory~\cite{Evans-Book10,GiTr-Book00,Ladyzhenskaya-Book85,McLean-Book00}, that for $0\le k\le 2$,
	\[
	\|(\phi_{1,j}, \phi_{2,j})\|_{[\cH^k(\Omega)]^2} \le \mathfrak{c}_1 \|f_j\|_{L^2(\partial\Omega)}.
	\]
	In the same way, equation~\eqref{EQ:W Diff} admits a unique solution with
	\begin{equation}\label{EQ:Stab WV}
	\|(\wh\phi_{1,j}, \wh \phi_{2,j})\|_{[\cH^k(\Omega)]^2} \le \mathfrak{c}_2 \|\delta\sigma_{a}(\phi_{1,j}, \phi_{2,j})\|_{[L^2(\Omega)]^2}  \le \wt{\mathfrak{c}}_2 \|\delta\sigma_{a}\|_{\cC_0^1(\Omega)}\|(\phi_{1,j}, \phi_{2,j})\|_{[L^2(\Omega)]^2},
	\end{equation}
	while~\eqref{EQ:Z Diff} admits a unique solution satisfying
	\begin{equation}\label{EQ:Stab Z}
	\|(\doubleline{\phi}_{1,j}, \doubleline{\phi}_{2,j})\|_{[\cH^k(\Omega)]^2} \le \mathfrak{c}_3 \|\delta\sigma_{a}(\wh\phi_{1,j}, \wh \phi_{2,j})\|_{[L^2(\Omega)]^2} \le \wt{\mathfrak{c}}_3 \|\delta\sigma_{a}\|_{\cC_0^1(\Omega)}\|(\wh\phi_{1,j}, \wh \phi_{2,j})\|_{[L^2(\Omega)]^2}.
	\end{equation}
	We then deduce that
	\begin{equation}
	\|(\doubleline{\phi}_{1,j}, \doubleline{\phi}_{2,j})\|_{[\cH^k(\Omega)]^2} \le \mathfrak{c}_4 \|\delta\sigma_a\|_{\cC_0^1(\Omega)}^2\|f_j\|_{L^2(\partial\Omega)},
	\end{equation}
	which then leads to,
	\begin{equation}\label{EQ:Stab Lim}
	\lim_{\|\delta\sigma_a\|_{\cC_0^1(\Omega)}\to 0}\dfrac{\|(\doubleline{\phi}_{1,j},\doubleline{\phi}_{2,j})\|_{[\cH^k(\Omega)]^2}}{\|\delta\sigma_a\|_{\cC_0^1(\Omega)}} \equiv \lim_{\|\delta\sigma_a\|_{\cC_0^1(\Omega)}\to 0}\dfrac{\|(\wh\phi_{1,j}-\wt\phi_{1,j}, \wh\phi_{2,j}-\wt\phi_{2,j})\|_{[\cH^k(\Omega)]^2}}{\|\delta\sigma_a\|_{\cC_0^1(\Omega)}}=0.
	\end{equation}
	This shows that the map $(\phi_{1,j}, \phi_{2,j})$ is Fr\'echet differentiable with respect to $\sigma_a$, as a map $(\phi_{1,j}, \phi_{2,j}): \cC^1(\bar\Omega) \mapsto \cH^k(\Omega)\times \cH^k(\Omega)$ ($0\le k\le 2$), with Fr\'echet derivative in direction $\delta\sigma_a\in\cC_0^1(\Omega)$ given by $(\wt\phi_{1,j}, \wt\phi_{2,j})$. Differentiability of $H_j^\cM$ with respect to $\sigma_a$ then follows from this fact, the regularity and boundedness of $\sigma_a$ and $(\phi_{1,j}, \phi_{2,j})$, and the product rule. 
	
	To compute the Fr\'echet derivative of $\cO(\sigma_a, \sigma_s)$ with respect to $\sigma_a$, we first compute
	\[
	{H_j^{\cM}}'(\sigma_a, \sigma_s)[\delta\sigma_a]=\Xi (\phi_{1,j} - \frac{2}{3}\phi_{2,j}) \delta\sigma_a + \Xi \sigma_a (\phi_{1,j} -\frac{2}{3}\phi_{2,j})'(\sigma_a, \sigma_s)[\delta\sigma_a],
	\]
	and
	\begin{equation}\label{EQ:O Deriv-a}
	\cO'(\sigma_a, \sigma_s)[\delta\sigma_a] = \sum_{j=1}^J\int_0^T\int_{\partial\Omega} (p_j^{\cM}-p_j^*){p_j^{\cM}}'(\sigma_a, \sigma_s)[\delta\sigma_a] dS(\bx) dt + \alpha\cR'(\sigma_a)[\delta\sigma_a].
	\end{equation}
	Let us denote $w_j := {p_j^{\cM}}'(\sigma_a, \sigma_s)[\delta\sigma_a]$. We verify that $w_j$ solves
	\begin{equation}\label{EQ:Wave Deriv}
	\begin{array}{rcll}
	\dfrac{1}{c^2(\bx)}\dfrac{\partial^2 w_j}{\partial t^2} -\Delta w_j &=& 0, 
	&\text{in}\ \bbR_+ \times \Omega \\
	w_j (t,\bx) = {H_j^{\cM}}'(\sigma_a, \sigma_s)[\delta\sigma_a],\ \  
	\dfrac{\partial w_j}{\partial t}(t,\bx) &=& 0,&\mbox{in}\ \{0\}\times\Omega\\
	\bn\cdot \nabla w_j &=& 0, & \mbox{on}\ \bbR_+\times\partial\Omega.
	\end{array}
	\end{equation}
	Multiplying the equation for $q_j$,~\eqref{EQ:Wave Adj}, by $w_j$, the equation for $w_j$,~\eqref{EQ:Wave Deriv}, by $q_j$, and integrating the difference over $(0, T)\times\Omega$, we arrive at
	\begin{multline}\label{EQ:O Deriv-b}
	\int_0^T\int_{\partial\Omega} (p_j^{\cM}-p_j^*) w_j dS(\bx) dt = \int_\Omega \dfrac{\Xi}{c^2} (\phi_{1,j}-\frac{2}{3}\phi_{2,j}) \dfrac{\partial q_j}{\partial t}(0, \bx) \delta\sigma_a d\bx \\
	+\int_\Omega  \dfrac{\Xi}{c^2} \sigma_a (\phi_{1,j}-\frac{2}{3}\phi_{2,j})'(\sigma_a, \sigma_s)[\delta\sigma_a] \dfrac{\partial q_j}{\partial t}(0, \bx) d\bx,
	\end{multline}
	thanks to Green's theorem. 
	
	Multiplying the equation for $(\psi_{1,j}, \psi_{2,j})$, i.e.~\eqref{EQ:SP2 simp Adj}, by $(\wt\phi_{1,j}, \wt\phi_{2,j})$ (which is nothing but $(\phi_{1,j}'(\sigma_a, \sigma_s)[\delta \sigma_a], \phi_{2,j}'(\sigma_a, \sigma_s)[\delta \sigma_a])$), the equation for $(\wt\phi_{1,j}, \wt\phi_{2,j})$, i.e.~\eqref{EQ:SP2 simp Deriv}, by $(\psi_{1,j}, \psi_{2,j})$, and integrating the difference over $\Omega$, we obtain
	\begin{multline}\label{EQ:O Deriv-c}
	\int_\Omega \dfrac{\Xi}{c^2} \sigma_a (\phi_{1,j}-\dfrac{2}{3}\phi_{2,j})'(\sigma_a, \sigma_s)[\delta\sigma_a] \dfrac{\partial q_j}{\partial t}(0, \bx) d\bx\\ =
	\int_\Omega (\phi_{1,j}-\dfrac{2}{3}\phi_{2,j})(\psi_{1,j}-\dfrac{2}{3\kappa}\psi_{2,j}) \delta\sigma_a d\bx.
	\end{multline}
	We now combine~\eqref{EQ:O Deriv-a}, ~\eqref{EQ:O Deriv-b} and ~\eqref{EQ:O Deriv-c} to get the final result in~\eqref{EQ:Obj Deriv SP2 a}. Similar calculations for $\sigma_s$ yield the result in~\eqref{EQ:Obj Deriv SP2 b}. This completes the proof.
\end{proof}

Let us emphasize here that the simplified $P_2$ diffusion system~\eqref{EQ:SP2 simp} is not self-adjoint. Therefore, the diffusion operators and the boundary conditions in~\eqref{EQ:SP2 simp Adj} are different from those in~\eqref{EQ:SP2 simp}. 

The calculations in Lemma~\ref{LMMA:Derivative} allow us to develop a subroutine for the SNOPT algorithm to evaluate the mismatch functional $\cO$ and its derivatives with respect to $\sigma_a$ and $\sigma_s$. For the convenience of presentation, let us denote by $\cO_j(\sigma_a, \sigma_s)$ the contributions to the mismatch functional $\cO$ from source $f_j$, that is, $\cO_j(\sigma_a, \sigma_s)=\dfrac{1}{2} \int_0^T \int_{\partial\Omega}(p_j^{\cM} -p_j^*)^2 dS(\bx)dt$. We use $\cO_j'$ to denote the derivative of $\cO_j$ at $(\sigma_a, \sigma_s)$. The algorithm for calculating $\cO$ and $\cO'$ is summarized in {\bf Subroutine}~\ref{ALG:Func Eval}.
\begin{algorithm}
	\caption{Evaluating $\cO$ and Its Derivatives at $(\sigma_a, \sigma_s)$ for Model $\cM$} 
	\begin{algorithmic}[1]
		\State Initialize $\cO=0$ and $\cO'=\bzero$
		\For{$j=1$ to $J$} 
		\State Solve the forward model $\cM$ (i.e., ~\eqref{EQ:SP2 simp} or ~\eqref{EQ:Diff}), with illumination source $f_j$
		\State Evaluate initial pressure field $H_j^{\cM}$ for model $\cM$ (following ~\eqref{EQ:Int Data SP2} or ~\eqref{EQ:Int Data Diff})
		\State Solve the wave equation~\eqref{EQ:Wave} with initial condition $H_j^\cM$ for $p_j^\cM$
		\State Evaluate the residual $z_j^\cM=p_j^{\cM}-p_j^*$ and $\cO_j=\frac{1}{2}\int_0^T \int_{\partial\Omega} (z_j^\cM)^2 dS(\bx) dt$ 
		\State $\cO \leftarrow \cO+\cO_j$
		\State Solve the adjoint wave equation~\eqref{EQ:Wave Adj}, and evaluate $\partial_t q_j(0,\bx)$
		\State Solve the adjoint diffusion equation for model $\cM$ (i.e.,~\eqref{EQ:SP2 simp Adj} or ~\eqref{EQ:Diff Adj})
		\State Evaluate the derivative $\cO_j'$ (following ~\eqref{EQ:Obj Deriv SP2 a} and ~\eqref{EQ:Obj Deriv SP2 b}, or~\eqref{EQ:Obj Deriv P1 a} and ~\eqref{EQ:Obj Deriv P1 b} ) 
		\State $\cO' \leftarrow \cO'+\cO_j'$
		\EndFor
		\State $\cO \leftarrow \cO+\alpha\cR(\sigma_a)+\beta\cR(\sigma_s)$
		\State $\cO' \leftarrow \cO'+\alpha\cR'(\sigma_a)[\delta\sigma_a]+\beta\cR'(\sigma_s)[\delta\sigma_s]$
	\end{algorithmic}
	\label{ALG:Func Eval}
\end{algorithm}

Our choice of using the SNOPT algorithm is quite random. Other nonlinear minimization algorithms can be used to solve our minimization problem here. In fact, in some of our numerical simulations in Section~\ref{SEC:Num}, we compared the SNOPT algorithm with a BFGS-based quasi-Newton method we implemented in~\cite{ReBaHi-SIAM06} and an augmented Lagrange method we implemented in~\cite{AbReHi-IP05}. We did not observe any consistent differences between results from different algorithms in terms of reconstruction quality.

Let us mention that the reconstruction algorithm we implemented here is based on the one-step approach: we reconstruct the optical coefficients directly from the measured ultrasound signal. This is the same approach that has been recently used in~\cite{BeBoHaPr-M2AS14,DiReVa-IP15,HaNeRa-IP15,PuCoArGoKaTa-IEEE16,ReTr-EJAM18}. This approach is useful for instance in cases where one can only measure ultrasound data in part of the domain boundary.

\section{Quantitative inversion with simplified $P_2$}
\label{SEC:Theory}

An alternative approach for PAT reconstruction is a two-step strategy: (i) to reconstruct the initial pressure field $H$ from measured ultrasound data; and then (ii) to reconstruct the optical coefficients from the reconstructed initial pressure field $H$. The first step involves only the acoustic model and is independent of the optical model, and reconstruction algorithms for this step have previously been developed in many scenarios~\cite{AgKuKu-PIS09,AmGoLe-IP10,BuMaHaPa-PRE07,CoArBe-IP07,HaScBuPa-IP04,Hristova-IP09,KuKu-EJAM08,QiStUhZh-SIAM11,StUh-IP09,TrZhCo-IP10,XuWaAmKu-MP04}. The second step of the reconstruction have been developed for both the diffusion model~\eqref{EQ:Diff}~\cite{BaRe-IP11,BaUh-IP10} and the radiative transport model~\eqref{EQ:RTE}~\cite{BaJoJu-IP10,HaNeNgRa-SIAM18,MaRe-CMS14,ReZhZh-IP15,SaTaCoAr-IP13}, but not the simplified $P_2$ model~\eqref{EQ:SP2 simp}, to our best knowledge. 

The objective of this section is to study the quantitative step of PAT with the simplified $P_2$ model: to reconstruct the optical coefficients from the initial pressure field data $H$ that one recovers from the ultrasound measurements. We assume again that we have data generated from $J\ge 1$ illumination sources. Let $(\phi_{1,j}, \phi_{2,j})$ be the solution to the simplified $P_2$ system~\eqref{EQ:SP2 simp} with source $f_j$. In the quantitative step, we wish to recover the optical coefficients from the data $\{H_j^{P_2}\}_{j=1}^J$.

\paragraph{The case of reconstructing $\sigma_a$ only.} We first consider the case where the absorption coefficient $\sigma_a$ is the only coefficient to be reconstructed. That is, the Gr\"uneisen coefficient and the scattering coefficient are both known. In this case, we can show that $\sigma_a$ can be uniquely recovered from only one initial pressure field. Moreover, the reconstruction of $\sigma_a$ is a relatively stable process.
\begin{theorem}\label{THM:Sigma-a}
	Under the assumptions in ($\cA$-i)-($\cA$-iv) and ~\eqref{EQ:Coeff Ass}, let $H_j^{P_2}$ and $\wt H_j^{P_2}$ be the initial pressure field corresponding to the coefficients $(\Xi, \sigma_a, \sigma_s)$ and $(\Xi, \wt\sigma_a, \sigma_s)$ respectively, induced by illumination source $f_j$. Assume further that $(\sigma_a, \wt\sigma_a)\in \cC^1(\bar\Omega)\times \cC^1(\bar\Omega)$, and $f_j$ is such that the corresponding solution to the simplified $P_2$ model satisfies the condition $(\phi_{1,j}-\frac{2}{3}\phi_{2,j})\neq 0$ a.e.. Then $H_j^{P_2}=\wt H_j^{P_2}$ a.e. implies $\sigma_a=\wt\sigma_a$. Moreover, we have the stability estimate
	\begin{equation}\label{EQ:Stab Sigma}
	\|(\sigma_a-\wt\sigma_a)(\phi_{1,j}-\dfrac{2}{3}\phi_{2,j})\|_{L^2(\Omega)} \le \mathfrak{c}\|H_j^{P_2}-\wt H_j^{P_2}\|_{L^2(\Omega)},
	\end{equation}
	where the constant $\mathfrak{c}$ depends on $\Omega$, $\Xi$, $\sigma_s$, $\underline{\alpha}$ and $\overline{\alpha}$.
\end{theorem}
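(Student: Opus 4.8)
The plan is to exploit the fact that, in the variables $\varphi_1=\phi_1-\tfrac{2}{3}\phi_2$ and $\varphi_2=\phi_2$ of system~\eqref{EQ:SP2 simp V2}, the measured datum is precisely $H^{P_2}=\Xi\sigma_a\varphi_1$ (see~\eqref{EQ:Int Data SP2 V2}), so the unknown $\sigma_a$ enters the interior equations only through the product $\sigma_a\varphi_1$. Writing $\varphi_{1,j}=\phi_{1,j}-\tfrac23\phi_{2,j}$, I would substitute $\sigma_a\varphi_{1,j}=H_j^{P_2}/\Xi$ into the two interior equations of~\eqref{EQ:SP2 simp V2}. Since the diffusion coefficient $D=1/(3(1-g)\sigma_s)$ depends only on the known, fixed scattering coefficient $\sigma_s$, this produces a \emph{linear} elliptic system for $(\varphi_{1,j},\varphi_{2,j})$ with interior source terms $-(1+\tfrac{4}{9\kappa})H_j^{P_2}/\Xi$ and $\tfrac{2}{3\kappa}H_j^{P_2}/\Xi$, and with the same ($\sigma_a$-free) boundary conditions. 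The identical operator applied to the tilded data governs $(\wt\varphi_{1,j},\wt\varphi_{2,j})$, driven by $\wt H_j^{P_2}$.

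The central step is to establish well-posedness and an a priori estimate for this substituted system. Setting $\Phi_i=\varphi_{i,j}-\wt\varphi_{i,j}$, the difference solves the same linear system with homogeneous boundary conditions (the source $f_j$ cancels) and interior right-hand sides proportional to $(H_j^{P_2}-\wt H_j^{P_2})/\Xi$. I would place this in weak form on $\cH^1(\Omega)\times\cH^1(\Omega)$, test with $(\Phi_1,\Phi_2)$, and prove coercivity of the associated bilinear form to obtain $\|(\Phi_1,\Phi_2)\|_{[\cH^1(\Omega)]^2}\le\mathfrak{c}\,\|H_j^{P_2}-\wt H_j^{P_2}\|_{L^2(\Omega)}$. \textbf{This is where the main difficulty lies.} After the substitution the first equation has lost its zeroth-order term in $\Phi_1$, so coercivity in the $\Phi_1$ slot cannot come from a reaction term; it must instead be extracted from the Robin boundary contribution $\tfrac{6\kappa+1}{12\kappa}\int_{\partial\Omega}\Phi_1^2$ (whose coefficient is strictly positive since $\kappa>0$) together with the Dirichlet energy $\int_\Omega D|\nabla\Phi_1|^2$, via the norm equivalence $\|\nabla u\|_{L^2}^2+\|u\|_{L^2(\partial\Omega)}^2\gtrsim\|u\|_{\cH^1}^2$. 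The system is moreover non-self-adjoint, so the non-symmetric interior coupling $-\tfrac{10}{27\kappa'}\tfrac1D\Phi_2$ and the boundary cross-terms must be absorbed by Young's inequality into the positive diagonal contributions (the two gradient terms, the $\Phi_1$ boundary term, and the $\Phi_2$ reaction term $\tfrac{5}{9\kappa'}\tfrac1D\Phi_2^2$). Since $g\in(0,1)$ makes $\kappa,\kappa'$ explicit positive constants and $D,1/D$ are bounded above and below under ($\cA$-ii), one checks that the cross-terms are subcritical and coercivity holds.

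With stability of the linear system in hand, the conclusion is elementary algebra. From $H_j^{P_2}=\Xi\sigma_a\varphi_{1,j}$ and $\wt H_j^{P_2}=\Xi\wt\sigma_a\wt\varphi_{1,j}$ I would write the identity
\begin{equation}
(\sigma_a-\wt\sigma_a)\varphi_{1,j}=\dfrac{H_j^{P_2}-\wt H_j^{P_2}}{\Xi}-\wt\sigma_a\,(\varphi_{1,j}-\wt\varphi_{1,j}),
\end{equation}
take $L^2(\Omega)$ norms, and bound the two terms using $\Xi\ge\underline{\alpha}$, $\wt\sigma_a\le\overline{\alpha}$, and the estimate on $\Phi_1=\varphi_{1,j}-\wt\varphi_{1,j}$ just obtained. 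This yields~\eqref{EQ:Stab Sigma} with a constant depending only on $\Omega$, $\Xi$, $\sigma_s$, $\underline{\alpha}$ and $\overline{\alpha}$, as claimed. Uniqueness then follows immediately: if $H_j^{P_2}=\wt H_j^{P_2}$ a.e., the right-hand side of~\eqref{EQ:Stab Sigma} vanishes, so $(\sigma_a-\wt\sigma_a)\varphi_{1,j}=0$ a.e., and the hypothesis $\varphi_{1,j}=\phi_{1,j}-\tfrac{2}{3}\phi_{2,j}\neq 0$ a.e. forces $\sigma_a=\wt\sigma_a$ a.e.
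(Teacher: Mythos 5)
Your proposal is correct and follows essentially the same route as the paper: substitute the datum $\sigma_a(\phi_1-\tfrac{2}{3}\phi_2)=H^{P_2}/\Xi$ into the interior equations to obtain a linear elliptic system with known ($\sigma_s$-dependent) coefficients, derive an a priori bound for the difference of solutions in terms of $\|H_j^{P_2}-\wt H_j^{P_2}\|_{L^2}$, and conclude via the identity $(\sigma_a-\wt\sigma_a)\varphi_{1,j}=(H_j^{P_2}-\wt H_j^{P_2})/\Xi-\wt\sigma_a(\varphi_{1,j}-\wt\varphi_{1,j})$ and the triangle inequality. The only differences are cosmetic --- you work in the $(\varphi_1,\varphi_2)$ variables of~\eqref{EQ:SP2 simp V2} rather than $(\phi_1,\phi_2)$, and you spell out the coercivity argument that the paper delegates to standard elliptic theory.
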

\begin{proof}
	Let $\Phi_{i,j}=\phi_{i,j}-\wt\phi_{i,j}$, $i=1,2$, where $(\phi_{1,j}, \phi_{2,j})$ and $(\wt\phi_{1,j}, \wt\phi_{2,j})$ are solutions to~\eqref{EQ:SP2 simp} with $(\sigma_a, \sigma_s)$ and $(\wt\sigma_a, \sigma_s)$ respectively. We verify that $(\Phi_{1,j}, \Phi_{2,j})$ solves the following system
	\begin{equation}\label{EQ:SP2 simp Stab}
	\begin{array}{rcll}
	-\nabla\cdot D \nabla \Phi_{1,j} + \dfrac{H_j^{P_2}-\wt H_j^{P_2}}{\Xi} &=& 0, & \mbox{in}\ \Omega\\
	-\nabla\cdot D \nabla \Phi_{2,j} + \dfrac{5}{9\kappa'}\dfrac{1}{D}\Phi_{2,j} -\dfrac{2}{3\kappa} \dfrac{H_j^{P_2}-\wt H_j^{P_2}}{\Xi} &=& 0, & \mbox{in}\ \Omega\\
	\bn\cdot D \nabla \Phi_{1,j} + \dfrac{1}{2}\Phi_{1,j} - \dfrac{1}{8} \Phi_{2,j} & = & 0, & \mbox{on}\ \partial\Omega\\
	\bn\cdot D \nabla\Phi_{2,j} +\dfrac{7}{24\kappa}\Phi_{2,j}- \dfrac{1}{8\kappa}\Phi_{1,j} & = & 0, & \mbox{on}\ \partial\Omega
	\end{array}
	\end{equation}
	The coefficients (i.e. $D$ and $\sigma_{a2}$) in this system of equations are all known (since $\sigma_s$ is known), independent of the unknown absorption coefficient $\sigma_a$. With the regularity assumptions we have, standard elliptic theory~\cite{GiTr-Book00,McLean-Book00} shows that when $H_j^{P_2}=\wt H_j^{P_2}$ a.e., i.e. $H_j^{P_2}-\wt H_j^{P_2}=0$ a.e., the solution $(\Phi_{1,j}, \Phi_{2,j})=(0, 0)$ a.e.. This immediately implies that $(\phi_{1,j}, \phi_{2,j})=(\wt \phi_{1,j}, \wt\phi_{2,j})$. Therefore, $\sigma_a=\frac{H_j^{P_2}}{\phi_{1,j}-\frac{2}{3}\phi_{2,j}}=\frac{\wt H_j^{P_2}}{\wt\phi_{1,j}-\frac{2}{3}\wt\phi_{2,j}}=\wt\sigma_a$ if $\phi_{1,j}-\frac{2}{3}\phi_{2,j}\neq 0$ a.e.. This proves the uniqueness part of the theorem.
			
	To derive the stability estimate~\eqref{EQ:Stab Sigma}, we observe that 
	\[
	|\dfrac{H_j^{P_2}-\widetilde H_j^{P_2}}{\Xi}|=|\sigma_a(\phi_{1,j}-\dfrac{2}{3}\phi_{2,j})-\wt\sigma_a(\wt\phi_{1,j}-\dfrac{2}{3}\wt\phi_{2,j})|  = |(\sigma_a-\wt\sigma_a)(\phi_{1,j}-\dfrac{2}{3}\phi_{2,j})+\wt\sigma_a (\Phi_{1,j}-\dfrac{2}{3}\Phi_{2,j})|.
	\]
	This, together with the triangle inequality, gives that 
	\begin{equation}\label{EQ:Stab Sigma a}
	\|(\sigma_a-\wt\sigma_a)(\phi_{1,j}-\dfrac{2}{3}\phi_{2,j})\|_{L^2(\Omega)} \le \mathfrak{c}_1 |\dfrac{H_j^{P_2}-\widetilde H_j^{P_2}}{\Xi}\|_{L^2(\Omega)}+\mathfrak{c}_2\|(\Phi_j, \Psi_j)\|_{[L^2(\Omega)]^2},
	\end{equation}
	using the fact that $\sigma_a$ and $\wt\sigma_a$ are bounded as in the assumption ($\cA$-ii).
	
	On the other hand, the system~\eqref{EQ:SP2 simp Rec} provides us with the following stability estimate for $(\Phi_{1,j}, \Phi_{2,j})$:
	\begin{equation}\label{EQ:Stab Sigma b}
	\|(\Phi_{1,j}, \Phi_{2,j})\|_{[\cH^2(\Omega)]^2} \le \mathfrak{c}_3\|H_j^{P_2}-\wt H_j^{P_2}\|_{L^2(\Omega)}.
	\end{equation}
	The estimate~\eqref{EQ:Stab Sigma} then follows by combining~\eqref{EQ:Stab Sigma a} and~\eqref{EQ:Stab Sigma b}.
\end{proof}

The above proof is constructive in the sense that it provides an explicit procedure for the reconstruction of $\sigma_a$. To do that, we first solve
\begin{equation}\label{EQ:SP2 simp Rec}
\begin{array}{rcll}
-\nabla\cdot D \nabla \phi_{1,j} + \dfrac{H_j^{P_2}}{\Xi} &=& 0, & \mbox{in}\ \Omega\\
-\nabla\cdot D \nabla \phi_{2,j} + \dfrac{5}{9\kappa'}\dfrac{1}{D} \phi_{2,j} -\dfrac{2}{3\kappa} \dfrac{H_j^{P_2}}{\Xi} &=& 0, & \mbox{in}\ \Omega\\
\bn\cdot D \nabla \phi_{1,j} + \dfrac{1}{2}\phi_{1,j} - \dfrac{1}{8} \phi_{2,j} & = & \dfrac{1}{2} f_j(\bx), & \mbox{on}\ \partial\Omega\\
\bn\cdot D \nabla\phi_{2,j} +\dfrac{7}{24\kappa}\phi_{2,j} - \dfrac{1}{8\kappa}\phi_{1,j} & = & -\dfrac{1}{8\kappa} f_j(\bx), & \mbox{on}\ \partial\Omega
\end{array}
\end{equation}
for $(\phi_{1,j}, \phi_{2,j})$. We then simply reconstruct the absorption coefficient as $\sigma_a=\dfrac{H_j^{P_2}}{\Xi(\phi_{1,j}-\frac{2}{3}\phi_{2,j})}$ at points where $\phi_{1,j}(\bx)-\frac{2}{3}\phi_{2,j}(\bx)\neq 0$. When $J$ data sets are available, we reconstruct $\sigma_a$ as
\[
\sigma_a=\dfrac{\sum_{j=1}^J H_j^{P_2}}{\Xi\sum_{j=1}^J(\phi_{1,j}-\frac{2}{3}\phi_{2,j})}.
\]
Therefore, to reconstruct $\sigma_a$ from $J$ initial pressure fields, we only need to solve $J$ diffusion systems~\eqref{EQ:SP2 simp Rec} and perform some algebraic operations afterwards, even though the reconstruction problem is a nonlinear inverse problem.

The case of reconstructing the scattering coefficient or more than one coefficients are significantly more complicated, as demonstrated in the case of the classical diffusion model studied in~\cite{BaRe-IP11,BaUh-IP10}. We do not have results for these cases in the full nonlinear setting. We will instead study the problem in the linearized setting. 

We now use the second form of the simplified $P_2$ system given in~\eqref{EQ:SP2 simp V2}. We linearize the system formally following the differentiability result in Lemma~\ref{LMMA:Derivative}. We use  $(\Xi, \sigma_a, \sigma_s)$ and $(\delta \Xi, \delta\sigma_a, \delta\sigma_s)$ (note the equivalence $\delta D=-\delta \sigma_s/[3(1-g)\sigma_s^2]$) to denote respectively the background coefficients and the perturbation to the coefficients. We use 
\begin{equation}\label{EQ:Notations}
	(w_j, \phi_{2,j})=(\phi_{1,j}-\frac{2}{3}\phi_{2,j}, \phi_{2,j}), \qquad \mbox{and} \qquad 
	(\delta w_j, \delta\phi_{2,j})=(\delta\phi_{1,j}-\frac{2}{3}\delta\phi_{2,j}, \delta\phi_{2,j})
\end{equation} 
to denote the solutions to the background problem and the perturbations to the background solution caused by the perturbation in the coefficients, respectively. We then have that $(w_j, \phi_{2,j})$ solves
\begin{equation}\label{EQ:SP2 simp V2 Background}
\begin{array}{rcll}
-\nabla\cdot D \nabla w_j +(1+\frac{4}{9\kappa})\sigma_a w_j -\frac{10}{27\kappa' D}\phi_{2,j} &=& 0, & \mbox{in}\ \Omega\\
-\nabla\cdot D\nabla \phi_{2,j}+ \frac{5}{9\kappa'D} \phi_{2,j} -\frac{2}{3\kappa} \sigma_a w_j &=& 0, & \mbox{in}\ \Omega\\
\bn\cdot D \nabla w_j + \frac{6\kappa+1}{12} w_j+ \frac{15\kappa-10}{72\kappa} \phi_{2,j} & = &  \frac{6\kappa+1}{12} f_j(\bx), & \mbox{on}\ \partial\Omega\\
\bn\cdot D \nabla \phi_{2,j} +\frac{5}{24\kappa} \phi_{2,j} - \frac{1}{8\kappa} w_j & = &-\frac{1}{8\kappa}f_j(\bx), & \mbox{on}\ \partial\Omega
\end{array}
\end{equation}
while  $(\delta w_j, \delta \phi_{2,j})$ solves
\begin{equation}\label{EQ:SP2 simp V2 Lin}
\begin{array}{rcll}
-\nabla\cdot D \nabla \delta w_j +(1+\frac{4}{9\kappa})\sigma_a \delta w_j -\frac{10}{27\kappa' D}\delta\phi_{2,j} &=& \nabla\cdot \delta D \nabla w_j -(1+\frac{4}{9\kappa}) w_j \delta\sigma_a -\frac{10}{27\kappa' D^2}\phi_{2,j}\delta D, & \mbox{in}\ \Omega\\
-\nabla\cdot D\nabla \delta\phi_{2,j}+ \frac{5}{9\kappa'D} \delta \phi_{2,j} -\frac{2\sigma_a}{3\kappa} \delta w_j &=& \nabla\cdot\delta D\nabla \phi_{2,j}+\frac{5\phi_{2,j}}{9\kappa'D^2} \delta D +\frac{2 w_j}{3\kappa}\delta\sigma_a, & \mbox{in}\ \Omega\\
\bn\cdot D \nabla \delta w_j + \frac{6\kappa+1}{12} \delta w_j+ \frac{15\kappa-10}{72\kappa} \delta\phi_{2,j} & = & 0, & \mbox{on}\ \partial\Omega\\
\bn\cdot D \nabla \delta \phi_{2,j} +\frac{5}{24\kappa} \delta \phi_{2,j} - \frac{1}{8\kappa} \delta w_j & =& 0, & \mbox{on}\ \partial\Omega
\end{array}
\end{equation}
where we have used the fact that $D_{|\partial\Omega}$ is known (since ${\sigma_s}_{|\partial\Omega}$ is known) in the boundary conditions. 

The perturbed initial pressure field, the data, now take the form:
\begin{equation}\label{EQ:Int Data SP2 Pert}
\delta H_j^{P_2}=\left(\delta \Xi\sigma_a + \Xi\delta \sigma_a \right)w_j+\Xi \sigma_a\delta w_j.
\end{equation}
The linearized data show that if the background Gr\"uneisen coefficient $\Xi=0$, then $\delta H_j^{P_2}=\delta\Xi  \sigma_a(\phi_{1,j}-\frac{2}{3}\phi_{2,j})$. Therefore, we can reconstruct $\delta\Xi$, but not the other parameters. If the background absorption coefficient $\sigma_a=0$ (which means that the medium is weakly absorbing), then $\delta H_j^{P_2}=\Xi \delta\sigma (\phi_{1,j}-\frac{2}{3}\phi_{2,j})$. Therefore, we can reconstruct $\delta\sigma_a$ but not the other parameters.

\paragraph{The case of reconstructing $\sigma_s$ only.} We start with the case of reconstructing only the scattering coefficient. 

We have the following result in the linearized setting.
\begin{theorem}\label{THM:Recon Sigma-s}
	Under the assumptions in ($\cA$-i)-($\cA$-iv) and~\eqref{EQ:Coeff Ass} for the domain and the background coefficients, let $\delta H_j^{P_2}$ and $\delta \wt H_j^{P_2}$ be two perturbed data sets generated with perturbed coefficients $\delta\sigma_s$ and $\delta\wt\sigma_s$ respectively. Then we have the following bound on the reconstruction:
	\begin{equation}\label{EQ:Stab Sigma-s}
	\int_\Omega Q_j\left(\dfrac{\delta D}{D}-\dfrac{\delta\wt{D}}{D}\right)^2 d\bx \le \mathfrak{c}\|\delta H_j^{P_2}-\delta \wt H_j^{P_2}\|_{\cH^2(\Omega)}
	\end{equation}
	where the constant $\mathfrak{c}$ depends on $\Omega$ and the background coefficients, and $Q_j$ is defined as
	\begin{equation}\label{EQ:Uniq Ass}
	Q_j(\bx) := ((1+\frac{4}{9\kappa})\sigma_a w_j-4\gamma\phi_{2,j})\phi_{2,j} -\bu\cdot\nabla \phi_{2,j} -\dfrac{1}{3}D|\nabla\phi_{2,j}|^2.
	\end{equation}
If $Q_j(\bx)> -\frac{10}{27\kappa' D}\phi_{2,j}^2$ or $Q_j(\bx)\ge -\frac{10}{27\kappa' D}\phi_{2,j}^2$ and $|\nabla w_j|\ge \eps>0$ for some $\eps$, then $\delta H_j^{P_2}=\delta \wt H_j^{P_2}$ a.e on $\bar\Omega$ implies $\delta\sigma_s=\delta\wt{\sigma}_s$.
\end{theorem}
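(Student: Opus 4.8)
The plan is to adapt the energy strategy that underlies the diffusion (simplified $P_1$) case to the coupled $P_2$ system, exploiting that only $\sigma_s$ varies. Since $\delta\Xi=\delta\sigma_a=0$ here, the data formula~\eqref{EQ:Int Data SP2 Pert} collapses to $\delta H_j^{P_2}=\Xi\sigma_a\,\delta w_j$. Writing $\Delta w_j=\delta w_j-\delta\wt w_j$, $\Delta\phi_{2,j}=\delta\phi_{2,j}-\delta\wt\phi_{2,j}$, $\mu=\delta D-\delta\wt D$ and $\rho:=\mu/D$ (so $\rho=\delta D/D-\delta\wt D/D$ is the quantity in~\eqref{EQ:Stab Sigma-s}), subtracting the two data sets gives $\Xi\sigma_a\Delta w_j=\delta H_j^{P_2}-\delta\wt H_j^{P_2}$. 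As $\Xi\sigma_a$ is smooth and bounded below by ($\cA$-ii)--($\cA$-iii), this controls $\Delta w_j$ and its derivatives up to order two in $L^2(\Omega)$ by $\|\delta H_j^{P_2}-\delta\wt H_j^{P_2}\|_{\cH^2(\Omega)}$; I would also record $\mu=0$ on $\partial\Omega$, since ${\sigma_s}_{|\partial\Omega}$ and hence $D_{|\partial\Omega}$ is known by ($\cA$-iv). Subtracting the two copies of~\eqref{EQ:SP2 simp V2 Lin} yields a system for $(\Delta w_j,\Delta\phi_{2,j})$ with right-hand sides $\nabla\cdot(\mu\nabla w_j)-\frac{10}{27\kappa'D^2}\phi_{2,j}\mu$ and $\nabla\cdot(\mu\nabla\phi_{2,j})+\frac{5\phi_{2,j}}{9\kappa'D^2}\mu$. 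The first equation can be solved for the unmeasured field,
\begin{equation*}
\tfrac{10}{27\kappa'D}\Delta\phi_{2,j}=\underbrace{-\nabla\cdot D\nabla\Delta w_j+(1+\tfrac{4}{9\kappa})\sigma_a\Delta w_j}_{\text{controlled by the data}}-\nabla\cdot(\mu\nabla w_j)+\tfrac{10}{27\kappa'D^2}\phi_{2,j}\mu,
\end{equation*}
which is the key \emph{decoupling} identity: it expresses $\Delta\phi_{2,j}$ through $\mu$ (with one derivative) plus data-controlled quantities.

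The heart of the argument is an energy identity. I would insert the decoupling identity into the second difference equation and test the resulting relation against suitable multiples of $\mu$ (equivalently, pair the first equation with $\rho\phi_{2,j}$ and the second with $\rho w_j$), integrating by parts over $\Omega$ using the homogeneous boundary conditions and $\mu_{|\partial\Omega}=0$. The transport term $\nabla\cdot(\mu\nabla\phi_{2,j})$ tested against $\rho\phi_{2,j}$ produces $-\int_\Omega D\rho^2|\nabla\phi_{2,j}|^2$, the origin of the $-\tfrac13 D|\nabla\phi_{2,j}|^2$ piece of $Q_j$; the zeroth-order couplings produce the $(1+\tfrac{4}{9\kappa})\sigma_a w_j\phi_{2,j}$ and $\phi_{2,j}^2$ pieces, while the leftover first-order-in-$\mu$ contributions reorganize into the transport term $-\bu\cdot\nabla\phi_{2,j}$. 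After this bookkeeping every term is either the weighted quadratic $\int_\Omega Q_j\rho^2\,d\bx$, the sign-definite term $\int_\Omega\frac{10}{27\kappa'D}\phi_{2,j}^2\rho^2\,d\bx$, a nonnegative gradient term, or a remainder bounded by $\|\Delta w_j\|_{\cH^2(\Omega)}$. Discarding the nonnegative terms and estimating the remainder by the data gives the stability bound~\eqref{EQ:Stab Sigma-s}.

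For uniqueness I would set $\delta H_j^{P_2}=\delta\wt H_j^{P_2}$, so $\Delta w_j\equiv 0$ and all data-controlled terms drop, leaving $\int_\Omega\big(Q_j+\frac{10}{27\kappa'D}\phi_{2,j}^2\big)\rho^2\,d\bx+(\text{nonnegative gradient term})=0$. If $Q_j>-\frac{10}{27\kappa'D}\phi_{2,j}^2$ the weight is strictly positive and $\rho\equiv 0$ at once. In the borderline case the weight is only nonnegative, so the energy identity forces both the weighted quadratic and the gradient term to vanish; the latter yields a transport relation for $\rho$ along the field $\nabla w_j$, and with $|\nabla w_j|\ge\eps>0$ together with $\mu_{|\partial\Omega}=0$ a characteristics/ODE argument propagates $\rho\equiv 0$ throughout $\Omega$. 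Either way $\mu\equiv 0$, and since $\delta D=-\delta\sigma_s/[3(1-g)\sigma_s^2]$ this gives $\delta\sigma_s=\delta\wt\sigma_s$ a.e.

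The step I expect to be the main obstacle is the energy identity itself: carrying out the integration by parts so that the genuinely highest-order-in-$\mu$ contributions (from $\nabla\cdot(\mu\nabla w_j)$ and $\nabla\cdot(\mu\nabla\phi_{2,j})$, which a priori raise the differential order) cancel or collapse into exactly the coercive weight $Q_j$ with its precise constant $\gamma$ and vector field $\bu$, plus sign-definite lower-order terms. Matching these constants and the transport vector field to $Q_j$ verbatim, and making the borderline characteristics argument rigorous (regularity of the streamlines of $\nabla w_j$ and the fact that they reach $\partial\Omega$, where $\mu$ vanishes), is where the real work lies.
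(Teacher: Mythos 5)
Your proposal follows the same overall route as the paper -- use $\delta H_j^{P_2}/(\Xi\sigma_a)=\delta w_j$ to turn the linearized system into one for $(\delta\phi_{2,j},\delta D)$ with data-controlled sources, derive a weighted energy inequality with weight $Q_j$, and in the borderline case fall back on a transport equation for $\mu$ along $\bu=D\nabla w_j$ with $\mu_{|\partial\Omega}=0$. But the step you yourself flag as ``the main obstacle'' is precisely the step the proof consists of, and your sketch of it is only half right. The ingredient you are missing for the first equation is the renormalization identity $\nabla\cdot(\mu^2\bu)-2\mu\,\nabla\cdot(\mu\bu)+\mu^2\nabla\cdot\bu=0$ (a DiPerna--Lions-type identity, valid pointwise for any $\mu$ and $\bu$): one substitutes the linearized equation for $\nabla\cdot(\mu\bu)$ and the \emph{background} equation for $\nabla\cdot\bu$, multiplies by $\phi_{2,j}$, and integrates; this is how the first-order-in-$\mu$ term $\nabla\cdot(\mu\bu)$ is converted into a quadratic form in $\mu$ with no derivatives of $\mu$, producing the $-\bu\cdot\nabla\phi_{2,j}$ and $(1+\frac{4}{9\kappa})\sigma_a w_j\phi_{2,j}$ pieces of $Q_j$. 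Your multiplier $\rho\phi_{2,j}$ for the first equation is in fact equivalent to this (a double integration by parts reproduces the same identity), so that half of your plan would go through.

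The genuine gap is in your treatment of the second equation. You propose to test it against $\rho w_j$ (or to substitute the decoupling identity into it); neither works. Testing against $\rho w_j$ produces $\int_\Omega \rho w_j\,\nabla\cdot(D\nabla\delta\phi_{2,j})\,d\bx$, which after integration by parts involves $\nabla\rho$ -- exactly the quantity the whole argument is designed to avoid -- and it generates no coercivity. The paper instead tests the second equation against $\delta\phi_{2,j}$ itself, which yields the sign-definite terms $\int_\Omega\bigl[D|\nabla\delta\phi_{2,j}|^2+\frac{3}{2}\gamma|\delta\phi_{2,j}|^2\bigr]d\bx$; these are what absorb, via Young's inequality, the cross terms $\mu D\nabla\phi_{2,j}\cdot\nabla\delta\phi_{2,j}$, $\mu\gamma\phi_{2,j}\delta\phi_{2,j}$ and the data term, and the leftover halves of those Young splittings are precisely where the remaining pieces $-4\gamma\phi_{2,j}^2$ and $-\frac{1}{3}D|\nabla\phi_{2,j}|^2$ of $Q_j$ come from. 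Without this second test function the cross term $\gamma\,\mu\,\delta\phi_{2,j}$ left over from the renormalized first equation cannot be controlled, and the specific constants in $Q_j$ cannot be recovered. The uniqueness argument you give (strict case immediate; borderline case reduces to $\nabla\cdot(\mu\bu)-\gamma\phi_{2,j}\mu=0$ with $\mu=0$ on $\partial\Omega$ and $|\bu|\ge\eps$) matches the paper once the energy inequality is in hand.
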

\begin{proof}
	When only $\sigma_s$ is sought, the perturbed datum~\eqref{EQ:Int Data SP2 Pert} simplifies to
	\begin{equation}\label{EQ:Int Data SP2 Pert sigma-s}
	\dfrac{\delta H_j^{P_2}}{\Xi}=\sigma_a \delta w_j,
	\end{equation}
	while the perturbed simplified $P_2$ system~\eqref{EQ:SP2 simp V2 Lin} simplifies to
	\begin{equation}\label{EQ:SP2 simp V2 Lin Sigma-s}
	\begin{array}{rcll}
	-\nabla\cdot D \nabla \delta w_j +(1+\frac{4}{9\kappa})\sigma_a \delta w_j -\frac{10}{27\kappa'D} \delta \phi_{2,j} &=& \nabla\cdot \delta D \nabla w_j-\frac{10\phi_{2,j}}{27\kappa'D^2} \delta D, & \mbox{in}\ \Omega\\
	-\nabla\cdot D\nabla \delta\phi_{2,j}+ \frac{5}{9\kappa'D} \delta \phi_{2,j} -\frac{2}{3\kappa} \sigma_a\delta w_j &=& \nabla\cdot\delta D\nabla \phi_{2,j}+\frac{5\phi_{2,j}}{9\kappa'D^2} \delta D, & \mbox{in}\ \Omega\\ \bn\cdot D \nabla \delta w_j + \frac{6\kappa+1}{12\kappa}\delta w_j+\frac{15\kappa-10}{72\kappa}\delta \phi_{2,j} & = & 0, & \mbox{on}\ \partial\Omega\\ \bn\cdot D \nabla\delta\phi_{2,j} +\frac{5}{24\kappa}\delta\phi_{2,j} - \frac{1}{8\kappa}\delta w_j & = &0, & \mbox{on}\ \partial\Omega
	\end{array}
	\end{equation}
	
	Using~\eqref{EQ:Int Data SP2 Pert sigma-s}, we can further rewrite~\eqref{EQ:SP2 simp V2 Lin Sigma-s} into
	\begin{equation}\label{EQ:SP2 simp V2 Lin Sigma-s-2}
	\begin{array}{rcll}
	-\frac{10}{27\kappa'D} \delta \phi_{2,j} &=& \nabla\cdot \delta D \nabla w_j-\frac{10\phi_{2,j}}{27\kappa'D^2} \delta D \\
	& & \qquad \qquad + \nabla\cdot D \nabla \frac{\delta H_j^{P_2}}{\Xi\sigma_a} -(1+\frac{4}{9\kappa})\frac{\delta H_j^{P_2}}{\Xi}, & \mbox{in}\ \Omega\\
	-\nabla\cdot D\nabla \delta\phi_{2,j}+ \frac{5}{9\kappa'D} \delta \phi_{2,j} &=& \nabla\cdot\delta D\nabla \phi_{2,j}+\frac{5\phi_{2,j}}{9\kappa'D^2} \delta D+\frac{2}{3\kappa} \frac{\delta H_j^{P_2}}{\Xi}, & \mbox{in}\ \Omega\\ 
\bn\cdot D \nabla\delta\phi_{2,j} +\frac{5}{24\kappa}\delta\phi_{2,j}  & = &\frac{1}{8\kappa}\frac{\delta H_j^{P_2}}{\Xi\sigma_a}, & \mbox{on}\ \partial\Omega\\
	\delta D & = & 0, & \mbox{on}\ \partial\Omega
	\end{array}
	\end{equation}
	Note that we have added the natural boundary condition for $\delta D$ that we assumed in the assumption ($\cA$-iv). This can now be seen as a system of partial differential equations with $(\delta\phi_{2,j}, \delta D)$ as the unknown.

	Let $\bu=D\nabla w_j$, $\mu=\delta D/D$, $\gamma=\frac{10}{27\kappa' D}$ and $Y_j=\nabla\cdot D \nabla \frac{\delta H_j^{P_2}}{\Xi\sigma_a} -(1+\frac{4}{9\kappa})\frac{\delta H_j^{P_2}}{\Xi}$. We then verify that the first equation in~\eqref{EQ:SP2 simp V2 Lin Sigma-s-2} can be written as
	\begin{equation}\label{EQ:SP2 simp V2 Lin Sigma-s-4}
		\nabla\cdot\mu\bu-\gamma\phi_{2,j}\mu+\gamma\delta\phi_{2,j}+Y_j=0,
	\end{equation}
	and the first equation in the background system~\eqref{EQ:SP2 simp V2 Background} can be written as
	\begin{equation}\label{EQ:SP2 simp V2 Background-2}
		-\nabla\cdot\bu+(1+\frac{4}{9\kappa})\sigma_a w_j-\gamma\phi_{2,j}=0.
	\end{equation}
	Moreover, we observe that for any scalar function $\gamma$ and vector function $\bu$, we have
	\begin{equation}\label{EQ:Renorm-1}
		\nabla\cdot \mu^2 \bu-2\mu\nabla\cdot\mu \bu+\mu^2\nabla\cdot \bu=0.
	\end{equation}
	Using~\eqref{EQ:SP2 simp V2 Lin Sigma-s-4} and ~\eqref{EQ:SP2 simp V2 Background-2}, we can write~\eqref{EQ:Renorm-1} as
	\begin{equation}\label{EQ:Renorm-2}
		\nabla\cdot \mu^2 \bu-\mu^2(3\gamma\phi_{2,j}-(1+\frac{4}{9\kappa})\sigma_a w_j)+2\mu (\gamma\delta\phi_{2,j}+Y_j)=0.
	\end{equation}
	We multiply this equation by $\phi_{2,j}$ and integrate over $\Omega$ to get
	\begin{equation}\label{EQ:Renorm-3}
		\int_\Omega \mu^2 \left[ -\bu\cdot\nabla \phi_{2,j} -(3\gamma\phi_{2,j}-(1+\frac{4}{9\kappa})\sigma_a w_j)\phi_{2,j}\right] d\bx  +2\int_\Omega \mu (\gamma \delta\phi_{2,j}+Y_j)\phi_{2,j} d\bx = 0.
	\end{equation}

	Meanwhile, we can multiply the second equation in~\eqref{EQ:SP2 simp V2 Lin Sigma-s-2} by $\delta\phi_{2,j}$ and integrate over $\Omega$ to get
	\begin{equation}\label{EQ:Renorm-4}
		\int_\Omega \left[ D|\nabla\delta\phi_{2,j}|^2+ \dfrac{3}{2} \gamma |\delta\phi_{2,j}|^2+\mu D\nabla\phi_{2,j}\cdot\nabla\delta\phi_{2,j} -\dfrac{3}{2} \gamma \mu \phi_{2,j}\delta\phi_{2,j} - \frac{2}{3\kappa} \frac{\delta H_j^{P_2}}{\Xi}\delta\phi_{2,j} \right] d\bx = 0.
	\end{equation}

	We can now combine~\eqref{EQ:Renorm-3} and~\eqref{EQ:Renorm-4} to get
	\begin{multline}\label{EQ:Renorm-5}
		\int_\Omega \mu^2 \left[ -\bu\cdot\nabla \phi_{2,j} -(3\gamma\phi_{2,j}-(1+\frac{4}{9\kappa})\sigma_a w_j)\phi_{2,j}\right] d\bx + \int_{\Omega} \left[\dfrac{4}{3} D|\nabla\delta\phi_{2,j}|^2 + 2 \gamma |\delta\phi_{2,j}|^2\right] d\bx \\
+\dfrac{4}{3} \int_\Omega \mu D\nabla\phi_{2,j}\cdot\nabla\delta\phi_{2,j} d\bx + \int_\Omega 2 \mu Y_j \phi_{2,j} d\bx -\int_\Omega \frac{8}{9\kappa} \frac{\delta H_j^{P_2}}{\Xi}\delta\phi_{2,j} = 0.
	\end{multline}
	Using the fact that $\forall x, y\in\bbR$, $xy\le \frac{1}{2}((ax)^2+(y/a)^2)$, $\forall a\neq 0$, we have the following bounds for the last three terms in above equation:
	\begin{equation*}
	\begin{array}{c}
		\dint_\Omega \mu D\nabla\phi_{2,j}\cdot\nabla\delta\phi_{2,j} d\bx \le  \int_\Omega \left[\dfrac{1}{4} \mu^2 D|\nabla\phi_{2,j}|^2 +D |\nabla\delta\phi_{2,j}|^2 \right] d\bx,\\
	\dint_\Omega 2 \mu Y_j \phi_{2,j} d\bx \le  \int_\Omega \left[\gamma \mu^2 \phi_{2,j}^2 +\dfrac{Y_j^2 }{\gamma} \right] d\bx, \quad 
	\int_\Omega \frac{8\delta H_j^{P_2}}{9\kappa\Xi} \delta\phi_{2,j} \le  \int_\Omega \left[(\frac{2\sqrt{2} \delta H_j^{P_2}}{9\kappa\Xi\sqrt{\gamma}})^2 +2\gamma \delta\phi_{2,j}^2 \right] d\bx.
	\end{array} 
	\end{equation*}
	These bounds can be combined with ~\eqref{EQ:Renorm-5} to conclude that
	\begin{equation}\label{EQ:Renorm-7}
		\int_\Omega  \mu^2 \left[((1+\frac{4}{9\kappa})\sigma_a w_j-4\gamma\phi_{2,j})\phi_{2,j} -\bu\cdot\nabla \phi_{2,j} -\dfrac{1}{3}D|\nabla\phi_{2,j}|^2 \right]  d\bx \le \mathfrak{c} \|\delta H_j^{P_2}\|^2_{\cH^2(\Omega)},
	\end{equation}
	with $\mathfrak{c}$ depending on the background coefficients as well as $\Omega$. The stability result in~\eqref{EQ:Stab Sigma-s} then follows from the linearity of the problem.

	To prove the uniqueness claim, we observe that when $\delta H_j^{P_2}=0$ in ~\eqref{EQ:Renorm-5}, ~\eqref{EQ:Renorm-7} becomes
	\begin{equation}\label{EQ:Renorm-8}
		\int_\Omega \left\{\mu^2 \left[Q_j+\gamma\phi_{2,j}^2 \right] + \gamma |\delta\phi_{2,j}|^2\right\} d\bx \le 0.
	\end{equation}
	When $Q_j>-\gamma\phi_{2,j}^2$, we conclude that $\mu\equiv 0\equiv \delta\phi_{2,j}$ from the above inequality. When $Q_j+\gamma\phi_{2,j}^2=0$, we conclude from the above inequality that $\delta\phi_{2,j}\equiv 0$. The first equation in~\eqref{EQ:SP2 simp V2 Lin Sigma-s-2} then simplifies to, with $(\delta\phi_{2,j}, \delta H_j^{P_2})=(0,0)$,
	\begin{equation*}
		\nabla\cdot \mu \bu-\gamma\phi_{2,j}\mu=0,\ \ \mbox{in}\ \Omega, \qquad \mu=0, \ \ \mbox{on}\ \partial\Omega.
	\end{equation*}
	This transport equation admits the unique solution $\mu=0$ when $|\bu|\ge \eps>0$ for some $\eps$~\cite{BaRe-IP11,DiLi-AM89}. The proof is complete.

\end{proof}

We now consider the case where more than one coefficient is to be reconstructed. We focus on the practically important cases of reconstructing $(\delta \Xi, \delta\sigma_a)$ and $(\delta\sigma_a, \delta\sigma_s)$.

\paragraph{The case of reconstructing $(\delta\Xi, \delta\sigma_a)$.} In this case, the scattering coefficient $\sigma_s$ (and therefore $D$) is known. Therefore the linearized simplified $P_2$ equation~\eqref{EQ:SP2 simp V2 Lin} reduces to:
\begin{equation}\label{EQ:SP2 simp V2 Lin XS}
\begin{array}{rcll}
-\nabla\cdot D \nabla \delta w_j +(1+\frac{4}{9\kappa})\sigma_a \delta w_j-\frac{10}{27\kappa' D}\delta\phi_{2,j} &=& -(1+\frac{4}{9\kappa})\delta\sigma_a w_j, & \mbox{in}\ \Omega\\
-\nabla\cdot D\nabla \delta\phi_{2,j}+ \frac{5}{9\kappa'}\frac{1}{D} \delta \phi_{2,j} -\frac{2}{3\kappa}\sigma_a \delta w_j &=& \frac{2}{3\kappa}\delta\sigma_a w_j, & \mbox{in}\ \Omega\\
\bn\cdot D \nabla \delta w_j + \frac{6\kappa+1}{12}\delta w_j+\frac{15\kappa-10}{72\kappa}\delta\phi_{2,j} & = & 0, & \mbox{on}\ \partial\Omega\\
\bn\cdot D \nabla\delta\phi_{2,j} +\frac{5}{24\kappa}\delta\phi_{2,j} - \frac{1}{8\kappa}\delta w_j & = &0, & \mbox{on}\ \partial\Omega
\end{array}
\end{equation}
Since the linearized data~\eqref{EQ:Int Data SP2 Pert} does not depend on the scattering coefficient $\sigma_s$ explicitly, it remains in the original form in this case.

We now develop a two-stage procedure for the reconstruction of $(\delta \Xi, \delta \sigma_a)$. We first eliminate $\delta \Xi$ from the system to reconstruct $\delta\sigma_a$. To do that, we check that, for any $i\neq j$,
\begin{equation}\label{EQ:Int Data SP2 Pert XS Ratio}
\delta H_{ij}^{P_2} \equiv w_j \dfrac{\delta H_{i}^{P_2}}{\Xi\sigma_a}-w_i \dfrac{\delta H_{j}^{P_2}}{\Xi\sigma_a}=w_j\delta w_i-w_i\delta w_j,
\end{equation}
We then observe that $\delta H_{ij}^{P_2}$ does NOT depend explicitly on the coefficient perturbation $\delta \Xi$. Moreover, the equations for the perturbations in~\eqref{EQ:SP2 simp V2 Lin XS} depend only on $\delta\sigma_a$. We could hope to reconstruct $\delta\sigma_a$ out of ~\eqref{EQ:SP2 simp V2 Lin XS} and~\eqref{EQ:Int Data SP2 Pert XS Ratio}.

We have the following partial result on the reconstruction of $\delta\sigma_a$.
\begin{theorem}\label{LMM:W 2 sigma-a}
	Under the assumptions in ($\cA$-i)-($\cA$-iv) and ~\eqref{EQ:Coeff Ass}, let $\delta w_j$ and $\wt {\delta w_j}$ be solutions to~\eqref{EQ:SP2 simp V2 Lin XS} with coefficients $\delta \sigma_a$ and $\wt{\delta\sigma_a}$ respectively. Assume further that the illumination source $f_j$ is selected such that the background solution $w_j\neq 0$. Then $\delta w_j=\wt {\delta w_j}$ a.e. implies that $\delta \sigma_a=\wt{\delta\sigma_a}$. Moreover, we have the following stability bound:
	\begin{equation}\label{EQ:Stab XS Sigma-a}
	\mathfrak{c}\|\delta w_j- \wt {\delta w_j}\|_{\cH^2(\Omega)}\le \|(\delta\sigma_a-\wt{\delta\sigma}_a)w_j\|_{L^2(\Omega)}\le \tilde{\mathfrak{c}}\|\delta w_j- \wt {\delta w_j}\|_{\cH^2(\Omega)},
	\end{equation}
	where $\mathfrak{c}$ and $\tilde{\mathfrak{c}}$ are constants that depend on the domain $\Omega$, the background coefficients and the background solution $(w_j, \phi_{2,j})$.
\end{theorem}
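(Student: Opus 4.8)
The plan is to pass to difference variables and use the linearity of \eqref{EQ:SP2 simp V2 Lin XS}. Writing $\Delta w_j=\delta w_j-\wt{\delta w_j}$, $\Delta\phi_{2,j}=\delta\phi_{2,j}-\wt{\delta\phi_{2,j}}$ and $\Delta\sigma_a=\delta\sigma_a-\wt{\delta\sigma_a}$, the pair $(\Delta w_j,\Delta\phi_{2,j})$ solves \eqref{EQ:SP2 simp V2 Lin XS} verbatim but with the single scalar source $S:=\Delta\sigma_a\,w_j$ entering the two interior equations (through the factors $-(1+\tfrac{4}{9\kappa})$ and $\tfrac{2}{3\kappa}$) and with homogeneous boundary conditions; here $w_j$ is the common background solution. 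Both inequalities in \eqref{EQ:Stab XS Sigma-a} are then assertions about the map $S\mapsto\Delta w_j$, and the uniqueness claim is an immediate corollary of the right-hand inequality: $\Delta w_j=0$ forces $\|\Delta\sigma_a w_j\|_{L^2(\Omega)}=0$, and the hypothesis $w_j\neq 0$ a.e.\ then yields $\Delta\sigma_a=0$ a.e.

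The left inequality is the forward-stability direction. Because $\Delta\sigma_a\in\cC_0^1(\Omega)$ and $w_j\in\cH^2(\Omega)$, the source $S$ lies in $L^2(\Omega)$, and the well-posedness of the linearized simplified $P_2$ system---standard elliptic theory for the coercive operator already used in Lemma~\ref{LMMA:Derivative}---gives $\|(\Delta w_j,\Delta\phi_{2,j})\|_{[\cH^2(\Omega)]^2}\le C\|S\|_{L^2(\Omega)}$; dropping the second component and relabeling the constant produces $\mathfrak{c}\|\Delta w_j\|_{\cH^2(\Omega)}\le\|\Delta\sigma_a w_j\|_{L^2(\Omega)}$.

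The substantive direction is the right inequality, which amounts to reconstructing the source $S$ from $\Delta w_j$ alone. The key maneuver is to eliminate $S$ between the two interior equations. Setting $a=1+\tfrac{4}{9\kappa}$, $d=\tfrac{2}{3\kappa}$ and $\gamma'=\tfrac{10}{27\kappa'}$, the combination $d\times(\text{first equation})+a\times(\text{second equation})$ cancels both the source $S$ \emph{and} the coupling term $\sigma_a\Delta w_j$, leaving the decoupled scalar equation
\[
-\nabla\cdot D\nabla\Delta\phi_{2,j}+\frac{\gamma'}{D}\Bigl(\tfrac32-\tfrac{d}{a}\Bigr)\Delta\phi_{2,j}=\frac{d}{a}\,\nabla\cdot D\nabla\Delta w_j\quad\text{in }\Omega,
\]
to be solved under the Robin condition $\bn\cdot D\nabla\Delta\phi_{2,j}+\tfrac{5}{24\kappa}\Delta\phi_{2,j}=\tfrac{1}{8\kappa}\Delta w_j$ read off from the last boundary line of \eqref{EQ:SP2 simp V2 Lin XS}. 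Next I would verify coercivity: since $\tfrac{d}{a}=\tfrac{6}{9\kappa+4}<\tfrac32$ for every $\kappa>0$ (and $\kappa,\kappa'>0$ throughout because $1+g+g^2>0$), the zeroth-order coefficient is strictly positive, and together with the positive Robin coefficient this yields $\|\Delta\phi_{2,j}\|_{\cH^2(\Omega)}\le C\|\Delta w_j\|_{\cH^2(\Omega)}$ by elliptic regularity, the right-hand side being admissible because $D\in\cC^1(\bar\Omega)$ puts $\nabla\cdot D\nabla\Delta w_j\in L^2(\Omega)$ and the trace theorem puts the Robin datum in $\cH^{3/2}(\partial\Omega)$. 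Finally, solving the first interior equation algebraically for $S$,
\[
(1+\tfrac{4}{9\kappa})S=\nabla\cdot D\nabla\Delta w_j-(1+\tfrac{4}{9\kappa})\sigma_a\Delta w_j+\frac{\gamma'}{D}\Delta\phi_{2,j},
\]
and taking $L^2(\Omega)$ norms while invoking the bound on $\Delta\phi_{2,j}$ delivers $\|\Delta\sigma_a w_j\|_{L^2(\Omega)}\le\tilde{\mathfrak{c}}\|\Delta w_j\|_{\cH^2(\Omega)}$.

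I expect the main obstacle to be the decoupling step and its coercivity: one must check that the specific linear combination dictated by the coefficients of the simplified $P_2$ model simultaneously annihilates the source and the $\sigma_a\Delta w_j$ cross term---this is exactly where the particular algebraic structure of \eqref{EQ:SP2 simp V2 Lin XS} is indispensable---and that the surviving zeroth-order coefficient keeps a favorable sign over the entire range $g\in(-1,1)$ entering through $\kappa,\kappa'$. A secondary, purely technical point is the regularity bookkeeping: the elimination produces $\nabla\cdot D\nabla\Delta w_j$ as a source, so the argument closes only in the $\cH^2(\Omega)$ topology, which is precisely why the $\cH^2$ norm rather than a weaker one appears on both sides of \eqref{EQ:Stab XS Sigma-a}.
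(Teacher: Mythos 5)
Your proposal is correct and follows essentially the same route as the paper's proof: the left inequality is the standard forward elliptic estimate, and the right inequality comes from exactly the paper's key maneuver of taking $\frac{2}{3\kappa}\times(\text{first equation})+(1+\frac{4}{9\kappa})\times(\text{second equation})$ to cancel simultaneously the source $\delta\sigma_a w_j$ and the cross term $\sigma_a\,\delta w_j$, leaving the coercive decoupled equation \eqref{EQ:WP} (your zeroth-order coefficient $\frac{\gamma'}{D}(\frac{3}{2}-\frac{d}{a})$ equals the paper's $\frac{5\kappa}{(4+9\kappa)\kappa' D}$), after which elliptic regularity and reading the source off the first equation close the argument. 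The only cosmetic difference is that you obtain injectivity as a corollary of the right-hand inequality, whereas the paper gives a short direct substitution argument when $\delta w_j=0$; the two are logically equivalent since both rest on the same decoupling and the positivity of the same coefficient.
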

\begin{proof}
	We first prove the injectivity claim. Let $\delta w_j=0$. Then the first equation in~\eqref{EQ:SP2 simp V2 Lin XS} implies that
	\[
	\delta\sigma_a w_j =\frac{10}{27\kappa' D}\delta\phi_{2,j}/(1+\frac{4}{9\kappa}).
	\]
	The second equation in~\eqref{EQ:SP2 simp V2 Lin XS}, together with its boundary condition, then simplifies to
	\begin{equation*}
	\begin{array}{rcll}
	-\nabla\cdot D\nabla \delta\phi_{2,j}+ \kappa'''\delta \phi_{2,j}  &=& 0, & \mbox{in}\ \Omega\\
	\bn\cdot D \nabla\delta\phi_{2,j} +\frac{5}{24\kappa}\delta\phi_{2,j} & = &0, & \mbox{on}\ \partial\Omega
	\end{array}
	\end{equation*}
	where $k'''=\frac{5}{9\kappa'}\frac{1}{D}-\frac{2}{3\kappa}\frac{10}{27\kappa' D}/(1+\frac{4}{9\kappa}) )>0$. This equation admits only the trivial solution $\delta \phi_{2,j}\equiv 0$. Therefore $\delta\sigma_a\equiv 0$.
	
	To derive the stability bound~\eqref{EQ:Stab XS Sigma-a}, we first observe that the left inequality follows directly from classical theory for elliptic systems~\cite{GiTr-Book00, McLean-Book00}. To obtain the right inequality, we use the first equation in~\eqref{EQ:SP2 simp V2 Lin XS}. We take the square of both sides of the equation, integrate over $\Omega$, and use the triangle and the H\"older's inequalities to obtain
	\begin{equation}\label{EQ:Stab XS-1}
		\|\delta\sigma_a w_j\|_{L^2(\Omega)}^2 \le \mathfrak{c}_1 \left(\|\delta w_j\|_{\cH^2}^2 + \|\delta\phi_{2,j}\|_{L^2(\Omega)}^2 + \|\delta w_j\|_{\cH^1(\Omega)}\|\delta\phi_{2,j}\|_{L^2(\Omega)}\right).
	\end{equation}
	We now multiply the first equation in~\eqref{EQ:SP2 simp V2 Lin XS} by $\frac{2}{3\kappa}$, the second equation by $1+\frac{4}{9\kappa}$, and add the results together to get, after eliminating the factor $1+\frac{4}{9\kappa}$,
	\begin{equation}\label{EQ:WP}
		  -\nabla\cdot D\nabla \delta\phi_{2,j}+\frac{5\kappa}{(4+9\kappa)\kappa' D}\delta\phi_{2,j}=\frac{6}{4+9\kappa} \nabla\cdot D \nabla \delta w_j, \ \ \mbox{in}\ \ \Omega.
	\end{equation}
	Moreover, from the boundary condition for $\delta \phi_{2,j}$ in~\eqref{EQ:SP2 simp V2 Lin XS} we have
	\begin{equation}\label{EQ:WP BC}
	\bn\cdot D \nabla\delta\phi_{2,j} +\frac{5}{24\kappa}\delta\phi_{2,j} = \frac{1}{8\kappa}\delta w_j, \ \ \mbox{on}\ \ \partial\Omega
	\end{equation}
	We can therefore look at~\eqref{EQ:WP} and~\eqref{EQ:WP BC} as an elliptic equation for $\delta\phi_{2,j}$ and conclude from classical theory~\cite{GiTr-Book00,Evans-Book10} that 
	\begin{equation}\label{EQ:Stab XS-2}
		\|\delta \phi_{2,j}\|_{\cH^2(\Omega)}\le \mathfrak{c}_2\|\nabla\cdot D\nabla \delta w_j\|_{L^2(\Omega)}+\|\delta w_j\|_{L^2(\partial\Omega)}\le \tilde{\mathfrak{c}}\|\delta w_j\|_{\cH^2(\Omega)}.
	\end{equation}
	We now combine~\eqref{EQ:Stab XS-1} and ~\eqref{EQ:Stab XS-2} to get the right equality in~\eqref{EQ:Stab XS Sigma-a}.	
\end{proof}

So long as we could select two illumination sources $f_i$ and $f_j$ such that the background densities $w_i$ and $w_j$ do not destroy the invertibility of the map $\delta\sigma_a\mapsto \delta H_{ij}^{P_2}= w_i \delta w_j - w_j \delta w_i$, note again that both $\delta\sigma_a\mapsto \delta w_i$ and $\delta\sigma_a\mapsto \delta w_j$ are invertible by the previous theorem, we could uniquely reconstruct $\delta\sigma_a$ from $\delta H_{ij}^{P_2}$. 

To perform numerical reconstruction of $\delta\sigma_a$ from $J$ data sets, we use the usual least-square inversion method. We minimize the functional
\begin{equation}\label{EQ:OBJ XS}
	\Psi(\delta\sigma_a)=\sum_{1\le i< j\le J}\|w_i\delta w_j-w_j\delta w_i-\delta H_{ij}^{P_2*}\|_{L_2(\Omega)}^2+\beta\|\nabla \delta\sigma_a\|_{[L_2(\Omega)]^3}^2.
\end{equation}
Note that here we form the difference data using all $(i, j)$ pairs satisfying $i<j$. There are totally $J(J-1)/2$ such pairs. We solve this minimization problem using the SNOPT algorithm we described in the previous section, even though this problem is linear. Once we reconstructed $\delta\sigma_a$, we can reconstruct $\delta\Xi$ using the data~\eqref{EQ:Int Data SP2 Pert}:
\[
	\delta\Xi=\dfrac{\sum_{j=1}^J \delta H_j^{P_2}-\Xi\delta \sigma_a\sum_{j=1}^Jw_j-\Xi\sigma_a\sum_{j=1}^J\delta w_j}{\sigma_a \sum_{j=1}^J w_j}.
\]

\paragraph{The case of reconstructing $(\delta\sigma_a, \delta\sigma_s)$.} In the case where $\Xi$ is assumed known, the perturbed data~\eqref{EQ:Int Data SP2 Pert} simplify to
\begin{equation}\label{EQ:Int Data SP2 Pert SS}
	\dfrac{\delta H_j^{P_2}}{\Xi} = \sigma_a \delta w_j + \delta \sigma_a w_j.
\end{equation}
This simplification allows us to rewrite the system of equations for the perturbations, that is, system~\eqref{EQ:SP2 simp V2 Lin}, into the form
\begin{equation}\label{EQ:SP2 simp V2 Lin SS}
\begin{array}{rcll}
-\nabla\cdot D \nabla \delta w_j -\frac{10}{27\kappa'D} \delta \phi_{2,j} &=& \nabla\cdot \delta D \nabla w_j-\frac{10\phi_{2,j}}{27\kappa'D^2} \delta D -\frac{9\kappa+4}{9\kappa} \frac{\delta H_j^{P_2}}{\Xi}, & \mbox{in}\ \Omega\\
-\nabla\cdot D\nabla \delta\phi_{2,j}+ \frac{5}{9\kappa'D} \delta \phi_{2,j} &=& \nabla\cdot\delta D\nabla \phi_{2,j}+\frac{5\phi_{2,j}}{9\kappa'D^2} \delta D +\frac{2 }{3\kappa}\frac{\delta H_j^{P_2}}{\Xi}, & \mbox{in}\ \Omega\\
\bn\cdot D \nabla \delta w_j + \frac{6\kappa+1}{12}\delta w_j +\frac{15\kappa-10}{72\kappa} \delta\phi_{2,j}& = &0, & \mbox{on}\ \partial\Omega\\
\bn\cdot D \nabla\delta\phi_{2,j} +\frac{5}{24\kappa}\delta\phi_{2,j} -\frac{1}{8\kappa}\delta w_j & = & 0, & \mbox{on}\ \partial\Omega
\end{array}
\end{equation}
This system does \emph{not} depend explicitly on $\delta\sigma_a$. 

The simplification~\eqref{EQ:Int Data SP2 Pert SS} also allows us to form the difference data $\delta H_{ij}^{P_2}$ in the same way as in~\eqref{EQ:Int Data SP2 Pert XS Ratio}. The difference data $\delta H_{ij}^{P_2}$ in~\eqref{EQ:Int Data SP2 Pert XS Ratio} do not depend on $\delta\sigma_a$ either.

Let us again consider a two-stage procedure for the reconstruction of $(\delta\sigma_a, \delta\sigma_s)$. We first use the combination of~\eqref{EQ:SP2 simp V2 Lin SS} and ~\eqref{EQ:Int Data SP2 Pert XS Ratio} to reconstruct the perturbation of the scattering coefficient, $\delta\sigma_s$ (or equivalently $\delta D$). We then reconstruct $\delta\sigma_a$ once $\delta D$ has been reconstructed.

The following result is a simple corollary of Theorem~\ref{THM:Recon Sigma-s}.
\begin{corollary}\label{LMM:W 2 sigma-s}
	Under the same assumptions made in Theorem~\ref{THM:Recon Sigma-s}, the linear map 
	\[
		(\delta H_j^p, \delta w_j): 
		\begin{array}{c}
			\delta \sigma_s \mapsto (\delta H_j^p, \delta w_j)\\
			L^2(\Omega)\mapsto \cH^2(\Omega) \times \cH^2(\Omega)
		\end{array}
	\]
	is injective when $Q_j(\bx)> -\frac{10}{27\kappa' D}\phi_{2,j}^2$ or $Q_j(\bx)\ge -\frac{10}{27\kappa' D}\phi_{2,j}^2$ and $|\nabla w_j|\ge \eps>0$ for some $\eps$. Moreover, we have the following stability bound:
	\begin{equation}\label{EQ:Stab SS Sigma-s}
		\int_\Omega (\delta\sigma_s-\wt{\delta\sigma}_s)^2 Q_j d\bx \le \mathfrak{c}\left(\|\delta H_j^{P_2}-\delta \wt H_j^{P_2}\|_{L^2(\Omega)}^2+\|\delta w_j-\delta \wt w_j\|_{\cH^2(\Omega)}^2\right),
	\end{equation}
	where $\mathfrak{c}$ is a constant that depends on $\Omega$ and the background coefficients.
\end{corollary}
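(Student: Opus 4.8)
The plan is to exploit exactly what the surrounding discussion has already set up: the combination of the linearized system \eqref{EQ:SP2 simp V2 Lin SS} with the data relation \eqref{EQ:Int Data SP2 Pert SS} has been arranged so that the $(\delta w_j,\delta\phi_{2,j})$-system no longer contains $\delta\sigma_a$ explicitly, and it is \emph{structurally identical} to the system \eqref{EQ:SP2 simp V2 Lin Sigma-s-2} that drives the proof of Theorem~\ref{THM:Recon Sigma-s}. So the whole renormalization argument of that theorem should replay almost verbatim, the only change being the reinterpretation of one data term. Concretely, I would set, as before, $\bu=D\nabla w_j$, $\mu=\delta D/D$ and $\gamma=\frac{10}{27\kappa' D}$, and rewrite the first equation of \eqref{EQ:SP2 simp V2 Lin SS} in the transport form \eqref{EQ:SP2 simp V2 Lin Sigma-s-4}, namely $\nabla\cdot(\mu\bu)-\gamma\phi_{2,j}\mu+\gamma\delta\phi_{2,j}+Y_j=0$, now with forcing term
\[
Y_j=\nabla\cdot D\nabla \delta w_j-\Big(1+\tfrac{4}{9\kappa}\Big)\dfrac{\delta H_j^{P_2}}{\Xi}.
\]
The key observation is that this is precisely the $Y_j$ of Theorem~\ref{THM:Recon Sigma-s}: the piece that was there read off from $\delta H_j^{P_2}$ via the identity $\delta w_j=\delta H_j^{P_2}/(\Xi\sigma_a)$ is now supplied directly by the independent datum $\delta w_j$. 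One checks that the second interior equation and both Robin boundary conditions of \eqref{EQ:SP2 simp V2 Lin SS} coincide termwise with those of \eqref{EQ:SP2 simp V2 Lin Sigma-s-2}, once one notes that the boundary flux $\frac{1}{8\kappa}\delta w_j$ for $\delta\phi_{2,j}$ takes the place of $\frac{1}{8\kappa}\delta H_j^{P_2}/(\Xi\sigma_a)$.

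With $Y_j$ reinterpreted this way, I would run through the same sequence of manipulations: apply the renormalization identity \eqref{EQ:Renorm-1}, multiply by $\phi_{2,j}$ and integrate to get \eqref{EQ:Renorm-3}, pair the second equation against $\delta\phi_{2,j}$ to get \eqref{EQ:Renorm-4}, combine into \eqref{EQ:Renorm-5}, and apply the same Young bounds on the three cross terms to arrive at the weighted estimate \eqref{EQ:Renorm-7} with $Q_j$ as in \eqref{EQ:Uniq Ass}. The one genuine difference lies in the bookkeeping of norms on the right-hand side. The highest-order contribution to $Y_j$ now comes from $\nabla\cdot D\nabla \delta w_j$, i.e. two derivatives of $\delta w_j$, whereas $\delta H_j^{P_2}$ enters $Y_j$ only at zeroth order and appears elsewhere (the interior source $\frac{2}{3\kappa}\delta H_j^{P_2}/\Xi$) only through its $L^2$ norm; the boundary flux $\frac{1}{8\kappa}\delta w_j$ is absorbed by the trace inequality into $\|\delta w_j\|_{\cH^2(\Omega)}$. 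Hence \eqref{EQ:Renorm-7} becomes $\int_\Omega \mu^2 Q_j\,d\bx\le\mathfrak{c}\big(\|\delta w_j\|_{\cH^2(\Omega)}^2+\|\delta H_j^{P_2}\|_{L^2(\Omega)}^2\big)$. Since $\mu=\delta D/D=-\delta\sigma_s/\sigma_s$ and $\sigma_s$ is bounded above and below by ($\cA$-ii), the weighted integral of $\mu^2Q_j$ is comparable to $\int_\Omega(\delta\sigma_s)^2Q_j\,d\bx$, and \eqref{EQ:Stab SS Sigma-s} then follows by linearity after replacing the data by the difference of two data sets.

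For injectivity I would set $(\delta H_j^{P_2},\delta w_j)=(0,0)$, so that $Y_j\equiv 0$ and \eqref{EQ:Renorm-7} collapses to \eqref{EQ:Renorm-8}, $\int_\Omega\{\mu^2[Q_j+\gamma\phi_{2,j}^2]+\gamma|\delta\phi_{2,j}|^2\}\,d\bx\le 0$. The two hypotheses are disposed of exactly as in Theorem~\ref{THM:Recon Sigma-s}: if $Q_j>-\gamma\phi_{2,j}^2$ pointwise the integrand is nonnegative and vanishes only for $\mu\equiv 0$; if only $Q_j\ge-\gamma\phi_{2,j}^2$ holds, the inequality first forces $\delta\phi_{2,j}\equiv 0$, after which the transport equation $\nabla\cdot(\mu\bu)-\gamma\phi_{2,j}\mu=0$ with $\mu|_{\partial\Omega}=0$ admits only $\mu\equiv 0$ when $|\bu|=|D\nabla w_j|\ge\eps$. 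In either case $\delta D\equiv 0$, hence $\delta\sigma_s\equiv 0$, which is the injectivity claim.

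I do not expect a genuine analytic obstacle here — this really is a corollary — so the principal effort is organizational rather than conceptual. The one place that warrants care is the norm accounting just described: one must verify that the algebraic substitution turning \eqref{EQ:SP2 simp V2 Lin} into \eqref{EQ:SP2 simp V2 Lin SS} leaves the coefficient structure of the $(\delta w_j,\delta\phi_{2,j})$-system identical to the $\sigma_s$-only case, and then track which norm each term of $Y_j$ and of the boundary flux demands, so that the right-hand side of \eqref{EQ:Stab SS Sigma-s} comes out with $\|\delta w_j\|_{\cH^2(\Omega)}$ and $\|\delta H_j^{P_2}\|_{L^2(\Omega)}$ in precisely the stated combination. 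In particular the boundary term requires invoking the trace theorem to dominate $\|\delta w_j\|_{L^2(\partial\Omega)}$ by $\|\delta w_j\|_{\cH^2(\Omega)}$, and this is the only estimate that is not literally inherited from the proof of Theorem~\ref{THM:Recon Sigma-s}.
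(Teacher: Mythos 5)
Your proposal is correct and follows essentially the same route as the paper, whose proof simply observes that moving the $\delta w_j$ terms to the right-hand side makes \eqref{EQ:SP2 simp V2 Lin SS} structurally identical to \eqref{EQ:SP2 simp V2 Lin Sigma-s} and then reruns the renormalization argument of Theorem~\ref{THM:Recon Sigma-s}, with injectivity obtained by setting $(\delta H_j^{P_2},\delta w_j)=(0,0)$. Your expanded bookkeeping of which norm each term of $Y_j$ and the boundary flux demands (in particular the $\cH^2$ norm of $\delta w_j$ from the $\nabla\cdot D\nabla\delta w_j$ term and the trace inequality on $\partial\Omega$) is exactly the detail the paper leaves implicit.
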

\begin{proof}
	The proof is almost identical to that of Theorem~\ref{THM:Recon Sigma-s}. If we move the terms involving $\delta w_j$ to the right hand side, the system~\eqref{EQ:SP2 simp V2 Lin SS} has exactly the same structure as~\eqref{EQ:SP2 simp V2 Lin Sigma-s}. The stability bound~\eqref{EQ:Stab SS Sigma-s} follows from the same argument for~\eqref{EQ:Stab Sigma-s}. The injectivity claim follows when taking $(\delta H_j^{P_2}, \delta w_j)=(0, 0)$. 
\end{proof}

To reconstruct $\delta D$ from data $(\delta H_i^{P_2}, \delta H_{ij}^{P_2})$ (or equivalently $(\delta H_i^{P_2}, \delta H_j^{P_2})$ ), we need to select two illumination sources $f_i$ and $f_j$ such that the background densities $w_i$ and $w_j$ do not destroy the injectivity of the map $\delta D \mapsto (\delta H_i^{P_2}, \delta H_{ij}^{P_2})= (\delta H_i^{P_2}, w_i \delta w_j - w_j \delta w_i)$. Computationally, we solve the reconstruction problem by solving a least-square minimization problem with the same objective function in~\eqref{EQ:OBJ XS} (besides the regularization term which is now on $\delta D$). Once we reconstructed $\delta\sigma_a$, we can reconstruct $\delta\sigma_a$ using the data~\eqref{EQ:Int Data SP2 Pert SS}:
\[
\delta\sigma_a=\dfrac{\sum_{j=1}^J \delta H_j^{P_2}/\Xi-\sigma_a\sum_{j=1}^J\delta w_j}{\sum_{j=1}^J w_j}.
\]

\paragraph{Comparing simplified $P_2$ and $P_1$ reconstructions.} The main motivation for using more accurate forward light propagation models in PAT is that the reconstructions based these  models are more accurate. For instance, the difference between the reconstruction of the Gr\"uneisen coefficient from the radiative transfer model~\eqref{EQ:RTE} and that from the classical diffusion model~\eqref{EQ:Diff}, using the same data $H$, is given as
\[
	\Xi_{rte}-\Xi_{diff} = \dfrac{H(\varphi-\int_{\bbS^2}u(\bx,\bv) d\bv)}{\sigma_a (\int_{\bbS^2}u(\bx,\bv) d\bv) \varphi(\bx)}
\]
In this simple case, the error in the reconstruction of $\Xi$ is proportional to the difference between the solutions to the two models. In the next theorem, we characterize the difference between the reconstruction of $\sigma_a$ using the simplified $P_2$ model~\eqref{EQ:SP2 simp} and that using the classical diffusion model~\eqref{EQ:Diff}.
\begin{theorem}\label{THM:Stab Mult Model}
	Let $\Omega$, $c$ and $\Xi$ satisfy the assumptions in ($\cA$-i)-($\cA$-iv) and assume that the assumptions in~\eqref{EQ:Coeff Ass} holds. Let $\sigma_a^{P_2}$ and $\sigma_a^{P_1}$ be the absorption coefficients reconstructed with the simplified $P_2$ model~\eqref{EQ:SP2 simp} and the classical diffusion model~\eqref{EQ:Diff} respectively, using the same datum $H$. Assume that $H$ is also known on the boundary $\partial\Omega$.  Then we have 
	\begin{equation}\label{EQ:Rec Sigma-a Dif}
			\sigma_a^{P_2}-\sigma_a^{P_1} =\dfrac{H}{\Xi}\dfrac{\phi-(\phi_1-\frac{2}{3}\phi_2)}{\phi(\phi_1-\frac{2}{3}\phi_2)}=\dfrac{2}{3}\dfrac{\Xi}{H\sigma_a^{P_2} \sigma_a^{P_1}}\phi_2
	\end{equation}
	where $\phi_2$ solves
	\begin{equation}\label{EQ:SP2 simp phi2 Rec}
	\begin{array}{rcll}
	-\nabla\cdot D \nabla \phi_2 + \dfrac{5}{9\kappa'D}\phi_2  &=& \dfrac{2}{3\kappa} \dfrac{H}{\Xi}, & \mbox{in}\ \Omega\\
	\bn\cdot D \nabla\phi_2 +\dfrac{5}{24\kappa}\phi_2 & = & \dfrac{1}{8\kappa}(\dfrac{H}{\sigma_a \Xi}-f(\bx)), & \mbox{on}\ \partial\Omega
	\end{array}
	\end{equation}
\end{theorem}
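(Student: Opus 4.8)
The plan is to assemble the formula directly from the two reconstruction procedures, so I would begin by making both of them explicit. For the simplified $P_2$ model, the constructive recipe underlying Theorem~\ref{THM:Sigma-a} is: solve the decoupled linear system~\eqref{EQ:SP2 simp Rec} for $(\phi_1,\phi_2)$ from the datum $H$, then invert~\eqref{EQ:Int Data SP2} to set $\sigma_a^{P_2}=H/[\Xi(\phi_1-\frac{2}{3}\phi_2)]$. The classical diffusion model admits the analogous one-line reconstruction of~\cite{DiReVa-IP15,BaRe-IP11}: solve $-\nabla\cdot D\nabla\phi+H/\Xi=0$ in $\Omega$ with the Robin condition of~\eqref{EQ:Diff}, and set $\sigma_a^{P_1}=H/(\Xi\phi)$. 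The crucial structural remark, which drives the whole argument, is that $\phi$ and $\phi_1$ obey the \emph{same} interior equation $-\nabla\cdot D\nabla(\cdot)+H/\Xi=0$, because the common datum enters each model only through $\sigma_a\,(\text{density})=H/\Xi$.

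The first equality is then pure algebra. Subtracting the two reconstruction formulas gives $\sigma_a^{P_2}-\sigma_a^{P_1}=\frac{H}{\Xi}\bigl(\frac{1}{\phi_1-\frac{2}{3}\phi_2}-\frac{1}{\phi}\bigr)$, and placing the two fractions over a common denominator produces exactly $\frac{H}{\Xi}\frac{\phi-(\phi_1-\frac{2}{3}\phi_2)}{\phi(\phi_1-\frac{2}{3}\phi_2)}$.

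For the second equality I would first justify that the $\phi_2$ entering the formula is governed by the \emph{decoupled} boundary value problem~\eqref{EQ:SP2 simp phi2 Rec}. This follows by isolating the second line of~\eqref{EQ:SP2 simp Rec}, whose interior operator already does not involve $\phi_1$, and eliminating $\phi_1$ from its boundary condition through the identity $\phi_1-\frac{2}{3}\phi_2=H/(\Xi\sigma_a)$ on $\partial\Omega$; this is legitimate precisely because $H$ and $\sigma_a$ are known there (assumption ($\cA$-iv) together with the extra hypothesis that $H$ is known on $\partial\Omega$). A short computation converts the $\frac{7}{24\kappa}$ Robin coefficient into $\frac{5}{24\kappa}$ and yields the source $\frac{1}{8\kappa}(H/(\sigma_a\Xi)-f)$, which is exactly~\eqref{EQ:SP2 simp phi2 Rec}. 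Writing $\phi_1-\frac{2}{3}\phi_2=H/(\Xi\sigma_a^{P_2})$ and $\phi=H/(\Xi\sigma_a^{P_1})$, the denominator $\phi(\phi_1-\frac{2}{3}\phi_2)$ becomes $H^2/(\Xi^2\sigma_a^{P_1}\sigma_a^{P_2})$, so the prefactor reduces to a product of the two reconstructed absorptions times $\Xi/H$; it then remains only to identify the numerator $\phi-(\phi_1-\frac{2}{3}\phi_2)$ with $\frac{2}{3}\phi_2$, and the right-most member of~\eqref{EQ:Rec Sigma-a Dif} follows.

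The main obstacle is exactly this final identification, and it is the step I would treat most carefully. Since $\phi$ and $\phi_1$ solve the same interior equation, their difference is of harmonic type and is driven purely by the boundary conditions of the two models, whose sole discrepancy is the coupling term $-\frac{1}{8}\phi_2$ present in~\eqref{EQ:SP2 simp Rec} but absent in~\eqref{EQ:Diff}. Under the identification $\phi\equiv\phi_1$ — treating the diffusion density as the first $P_2$ composite, consistent since the two reconstructions share both the interior equation and the boundary data in this comparison — one obtains $\phi-(\phi_1-\frac{2}{3}\phi_2)=\frac{2}{3}\phi_2$, which is precisely what the closed form requires. I would therefore devote the bulk of the argument to showing that the boundary mismatch between~\eqref{EQ:SP2 simp Rec} and~\eqref{EQ:Diff} does not perturb the composite $\phi_1-\frac{2}{3}\phi_2$ relative to $\phi$, so that the entire correction is carried by the genuinely second-order quantity $\phi_2$. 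This is the only place where the simplified $P_2$ structure, rather than elementary algebra, does real work.
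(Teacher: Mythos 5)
Your proposal follows the paper's proof essentially verbatim: explicit reconstruction formulas $\sigma_a^{P_2}=H/[\Xi(\phi_1-\frac{2}{3}\phi_2)]$ and $\sigma_a^{P_1}=H/(\Xi\phi)$, algebraic subtraction for the first equality, decoupling of the $\phi_2$ equation by eliminating $\phi_1$ from its boundary condition via $\phi_1-\frac{2}{3}\phi_2=H/(\Xi\sigma_a)$ on $\partial\Omega$, and finally the observation that $\wt\phi=\phi-\phi_1$ satisfies a homogeneous Robin problem for $-\nabla\cdot D\nabla$ and hence vanishes, which is exactly how the paper closes the argument. The only caution is your closing phrasing about the boundary mismatch ``not perturbing the composite $\phi_1-\frac{2}{3}\phi_2$ relative to $\phi$'': what must actually be shown is $\phi\equiv\phi_1$ (not $\phi\equiv\phi_1-\frac{2}{3}\phi_2$), as you correctly state one sentence earlier, and this follows by rewriting the $\phi_1$ boundary condition as $\bn\cdot D\nabla\phi_1+\frac{5}{16}\phi_1=\frac{1}{2}f-\frac{3}{16}\frac{H}{\Xi\sigma_a}$ and comparing with the diffusion boundary condition using $\phi=H/(\Xi\sigma_a)$ on $\partial\Omega$.
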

\begin{proof}
	We first observe that $\sigma_a^{P_2}$ and $\sigma_a^{P_1}$ can be explicitly reconstructed with the following procedures:
	\begin{equation}\label{EQ:Diff Sigma}
	\sigma_a^{P_2}=\dfrac{H}{\Xi(\phi_1-\frac{2}{3}\phi_2)} \qquad \mbox{and} \qquad \sigma_a^{P_1}=\dfrac{H}{\Xi \phi}
	\end{equation}
	where $(\phi_1, \phi_2)$ is the solution to the simplified $P_2$ system~\eqref{EQ:SP2 simp} with $\sigma_a(\phi_1-\frac{2}{3}\phi_2)$ replaced with $H/\Xi$, and $\phi$ is the solution to the diffusion model~\eqref{EQ:Diff} with $\sigma_a \phi$ replaced by $H/\Xi$. In other words, $(\phi_1, \phi_2)$ solves
	\begin{equation}\label{EQ:SP2 simp Rec A}
	\begin{array}{rcll}
	-\nabla\cdot D \nabla \phi_1 &=& - \dfrac{H}{\Xi} , & \mbox{in}\ \Omega\\
	-\nabla\cdot D \nabla \phi_2 + \dfrac{5}{9\kappa'D}\phi_2  &=& \dfrac{2}{3\kappa} \dfrac{H}{\Xi}, & \mbox{in}\ \Omega\\
	\bn\cdot D \nabla \phi_1 + \dfrac{5}{16}\phi_1  & = & \dfrac{1}{2} f(\bx)-\dfrac{3}{16}\dfrac{H}{\sigma_a \Xi}, & \mbox{on}\ \partial\Omega\\
	\bn\cdot D \nabla\phi_2 +\dfrac{5}{24\kappa}\phi_2 & = & \dfrac{1}{8\kappa}(\dfrac{H}{\sigma_a \Xi}-f(\bx)), & \mbox{on}\ \partial\Omega
	\end{array}
	\end{equation}
	while $\phi$ solves
	\begin{equation}\label{EQ:Diff Rec}
	\begin{array}{rcll}
	-\nabla\cdot D \nabla \phi  &=& -\dfrac{H}{\Xi}, & \mbox{in}\ \Omega\\
	\bn\cdot D \nabla \phi + \dfrac{1}{2}\phi & = & \dfrac{1}{2} f(\bx), & \mbox{on}\ \partial\Omega
	\end{array}
	\end{equation}
	We therefore conclude that
	\begin{equation}\label{EQ:Diff Sigma B}
	\sigma_a^{P_2}-\sigma_a^{P_1} =\dfrac{H}{\Xi}\dfrac{\phi-(\phi_1-\frac{2}{3}\phi_2)}{\phi(\phi_1-\frac{2}{3}\phi_2)}=\dfrac{\Xi}{H\sigma_a^{P_2} \sigma_a^{P_1}}\left((\phi-\phi_1)+\frac{2}{3}\phi_2\right).
	\end{equation}
	We observe from the second equation and its boundary condition (i.e. the forth equation) in~\eqref{EQ:SP2 simp Rec A} that $\phi_2$ is the unique solution to~\eqref{EQ:SP2 simp phi2 Rec}. 

	Let $\wt\phi = \phi-\phi_1$. We verify that $\wt\phi$ solves 
	\begin{equation}\label{EQ:SP2 simp Rec B}
	\begin{array}{rcll}
	-\nabla\cdot D \nabla \wt\phi &=& 0, & \mbox{in}\ \Omega\\
	\bn\cdot D \nabla\wt \phi +\dfrac{5}{16\kappa}\wt\phi & = & 0, & \mbox{on}\ \partial\Omega
	\end{array}
	\end{equation}
	where we have used the assumption that $\sigma_a$ is known on $\partial\Omega$, made in~($\cA$-iv), so that $(\phi_1-\frac{2}{3}\phi_2)=\phi=\dfrac{H}{\Xi\sigma_a}$ on the boundary. This equation has a unique solution $\wt\phi=0$. Therefore~\eqref{EQ:Diff Sigma B} simplifies to~\eqref{EQ:Rec Sigma-a Dif}.
\end{proof}
This result says that the difference between reconstructions based on the simplified $P_2$ model~\eqref{EQ:SP2 simp} and reconstructions based on the classical diffusion model~\eqref{EQ:Diff} are noticeable. In particular, if the data are generated with the simplified diffusion model~\eqref{EQ:SP2 simp}, then using the classical diffusion model~\eqref{EQ:Diff} would give us reconstructions that are simply not as accurate, vice versa. An implication of this is that if we believe that the data we used in PAT are generated by a physical process best modeled by the radiative transport equation~\eqref{EQ:RTE}, then using the simplified $P_2$ model to perform reconstructions is advantageous to using the classical diffusion model, although we do not have an explicit characterization as in Theorem~\ref{THM:Stab Mult Model}.

\section{Numerical experiments}
\label{SEC:Num}

We now present some numerical simulations on the inverse problems studied in the previous sections. For simplicity, we consider a setup in which the physical properties and the illuminations used are invariant in the $z$ direction so that we can perform simulations in the two-dimensional case. 

The spatial variables in the simplified $P_2$ model~\eqref{EQ:SP2 simp}, the classical diffusion model~\eqref{EQ:Diff}, and the acoustic equation for ultrasound propagation~\eqref{EQ:Wave} are discretized using the finite element method with piecewise linear Lagrange elements. The time variable in the wave equation is discretized with a second-order finite difference scheme. The optical illumination sources are all selected to have strength distribution along the boundary following the Gaussian distribution with standard deviation $0.5$. The specific locations, i.e. centers, of the sources will be given later in the numerical examples.

The synthetic acoustic data we will use are generated by solving the forward diffusion models with the true optical properties and then feeding the corresponding initial pressure field $H$ into the acoustic wave equation. To mimic measurement error, we pollute the data with additional random noise by multiplying each datum point by $(1 + \sqrt{3}\eta \times 10^{-2} \texttt{random})$, with \texttt{random} a uniformly distributed random variable taking values in $[-1, 1]$, and $\eta$ being the noise level (i.e. the size of the variance in percentage). When no additional random noise are added to the synthetic data, we will say the data are ``clean'' ($\eta=0$). Otherwise, we say the data are ``noisy'' ($\eta\neq 0$).

The numerical simulations performed are all based on the minimization of the mismatch functional~\eqref{EQ:Obj} as documented in Section~\ref{SEC:Alg}. In all the simulations, the Gr\"uneisen coefficient is assumed known and $\Xi=0.5$. We emphasize that as long as $\Xi$ is assumed known, whether or not it is a constant has no visible impact on the reconstruction of $\sigma_a$ and $\sigma_s$. Moreover, under the same setup, we observe no visible differences between reconstructions in the case of constant ultrasound speed and these in the case of variable ultrasound speed. The numerical results we presented here are all obtained with $c(\bx)=1$.

We present two groups of numerical simulations. 

\subsection{Inversions based on the simplified $P_2$ model}
\label{SUBSEC:Num Acous}

In the first group, we study PAT reconstructions with the simplified $P_2$ light propagation model. That is, we generate synthetic data using the model equation system~\eqref{EQ:SP2 simp} and perform the reconstruction using the same system of equation. 

\paragraph{Experiment 1 [Reconstructing $\sigma_a$ from Acoustic Data].} In the first numerical experiment, we attempt to reconstruct the absorption coefficient assuming that the scattering coefficient, as well as the Gr\"uneisen coefficient which is set to be $\Xi=0.5$ as mentioned above, is known. More precisely, the spatial domain we take is the square $\Omega = (0, 2)\times (0, 2)$, the scattering coefficient $\sigma_s = 80$, and the anisotropic factor of the scattering kernel $g=0.9$. The true absorption coefficient has the form
\begin{equation}\label{EQ:Sigma-a-PAT1}
	\sigma_a(\bx) = 0.1 + 0.1 \chi_{B_1} + 0.2 \chi_{B_2},
\end{equation}
with $B_1 = \{(x,y) : (x-1.0)^2 + (y-1.5)^2 \le (0.2)^2 \}$ and $B_2 = \{(x,y) : (x-1.5)^2 + (y-1.0)^2 \le (0.3)^2 \}$. The acoustic data are collected from four different illuminations located respectively at the centers of the four sides of the domain.

\begin{figure}[hbt!]
	\centering
	\includegraphics[width=0.45\textwidth,height=0.2\textwidth]{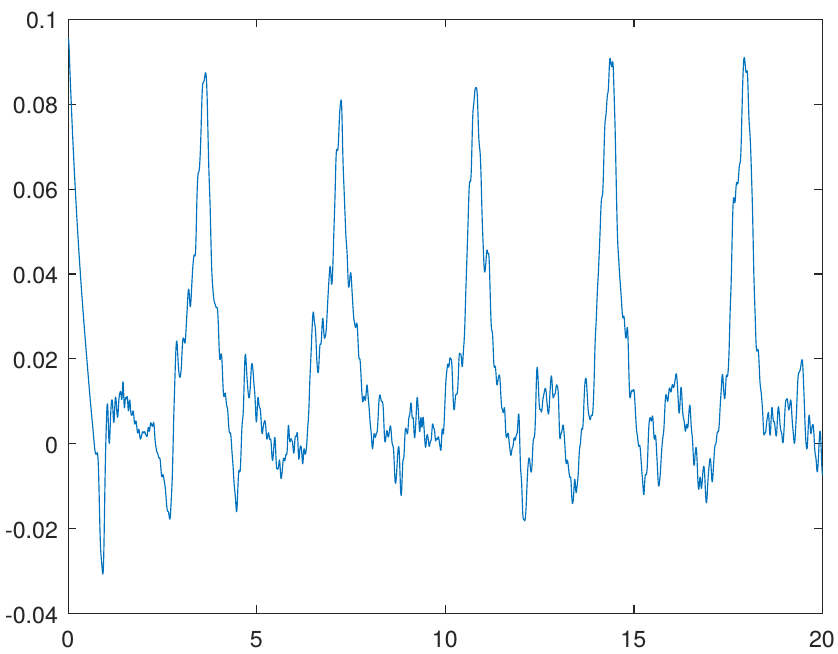}
	\includegraphics[width=0.45\textwidth,height=0.2\textwidth]{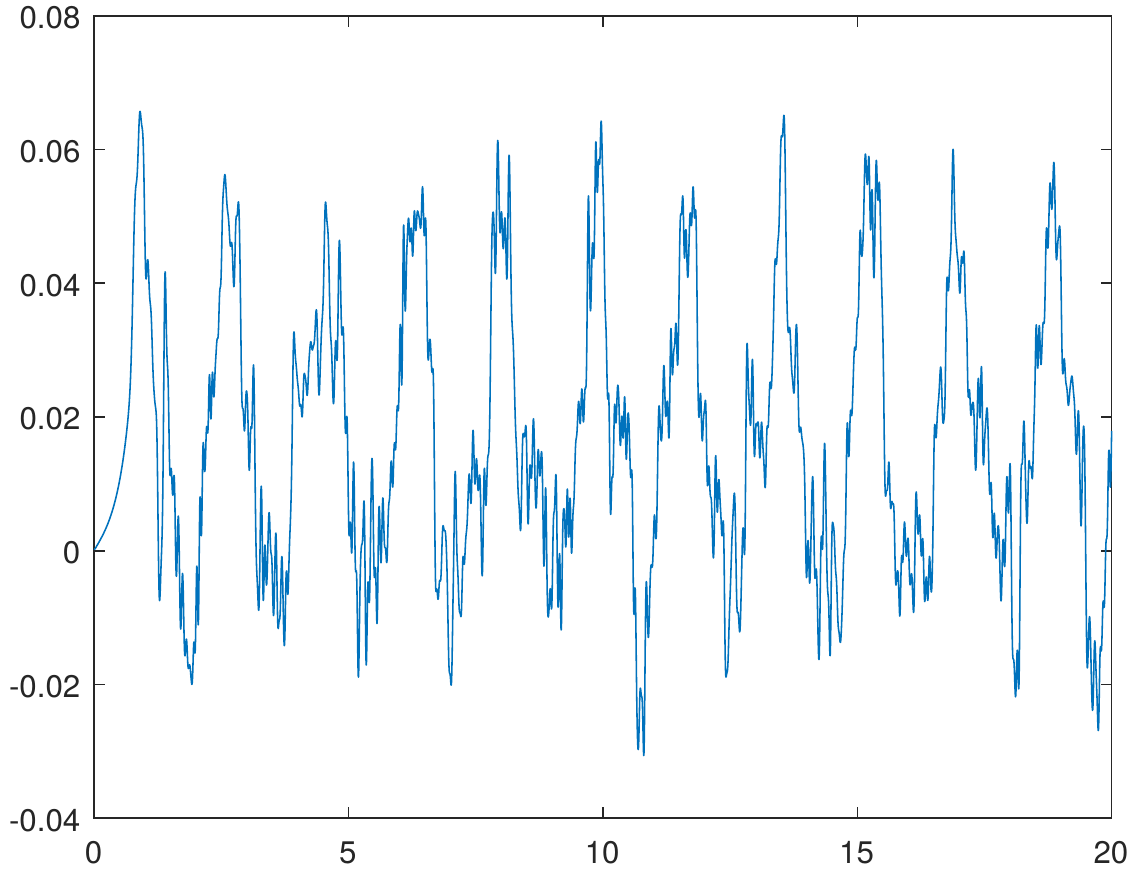}\\
	\includegraphics[width=0.45\textwidth,height=0.2\textwidth]{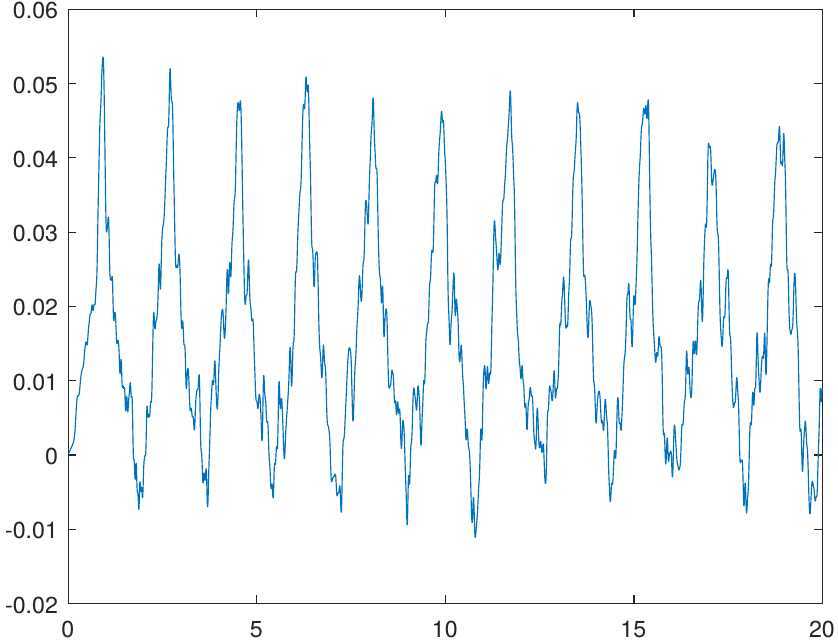}
	\includegraphics[width=0.45\textwidth,height=0.2\textwidth]{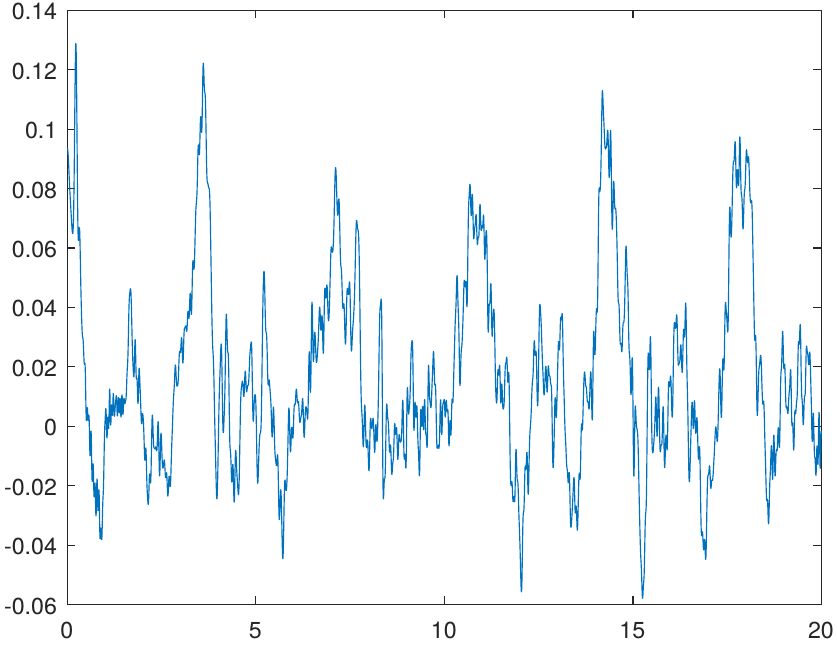}
	\caption{Ultrasound signals measured at two different locations ($(0.0, 0.8)$ (left) and $(1.0, 1.2)$ (right)) for two different illumination sources (top row: source located on the left boundary; bottom row: source located on the right boundary) in the time window $(0, 20)$.}
	\label{FIG:Ultrasound Signals}
\end{figure}
We first show in Figure~\ref{FIG:Ultrasound Signals} some typical acoustic signals we recorded in this setup. Shown are signals measured at $(0.0, 0.8)$ and $(1.0, 1.2)$ respectively for two different optical illuminations in the time interval $(0, T=20)$. Note that the data used in the reconstructions in the rest of the paper are on a larger time interval with $T=40$, even though we observe in our numerical experiments that $T=10$ is often more than enough for stable reconstructions.

In Figure~\ref{FIG:Sigma-a-PAT1}, we show the reconstruction of the absorption coefficient ~\eqref{EQ:Sigma-a-PAT1} using data with noise levels $\eta=0$ (i.e. noise-free data) and $\eta=5$ respectively. The algorithm parameters are as follows. The initial guess for all the reconstructions is $\sigma_a^0=0.1$. The linear bounds we impose on the absorption coefficient are very loose: $10^{-3} \leq \sigma_a \leq 0.5$. The regularization parameter in~\eqref{EQ:Obj} was chosen as $\alpha = 1e-9$ by trial and error.
\begin{figure}[hbt!]
	\centering
	\includegraphics[width=0.3\textwidth]{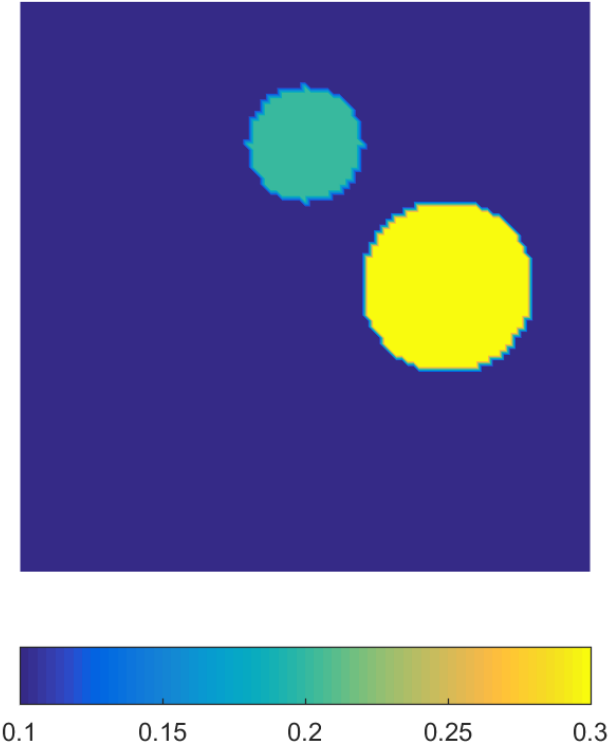}
	\includegraphics[width=0.3\textwidth]{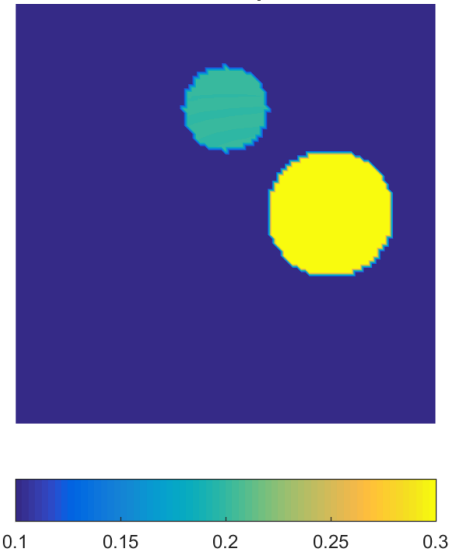}\hskip 0.1cm
	\includegraphics[width=0.3\textwidth]{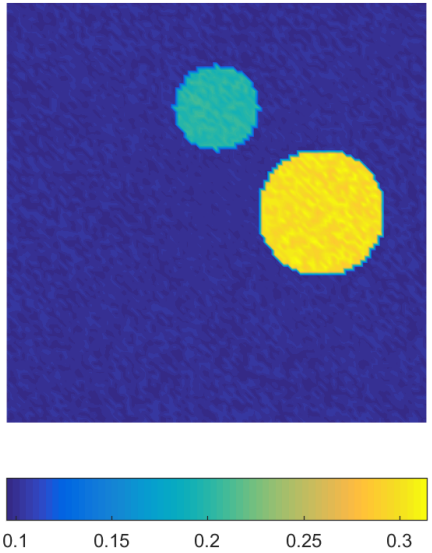}
	\caption{The absorption coefficient $\sigma_a$ in~\eqref{EQ:Sigma-a-PAT1} (left) and the reconstructions using noise-free data ($\eta=0$, middle) and noisy data ($\eta=5$, right).}
	\label{FIG:Sigma-a-PAT1}
\end{figure}

Visual observation confirms that the reconstructions are of very high quality in this case, comparable to the quality of reconstructions for PAT using the radiative transfer model~\cite{HaNeNgRa-SIAM18,MaRe-CMS14,SaTaCoAr-IP13} and the classic diffusion model~\cite{BaRe-IP11,CoArKoBe-AO06}. To quantitatively measure the quality of the reconstructions, we compute the relative $L^2$ distance between the reconstructions and the true coefficient. This distance is defined as 
\[
	\cE=\dfrac{\|\hat \sigma_a-\sigma_a\|_{L^2(\Omega)}}{\|\sigma_a\|_{L^2(\Omega)}},
\]
where $\sigma_a$ and $\hat\sigma_a$ are respectively the true and reconstructed absorption coefficients. The relative $L^2$ error for the reconstructions in Figure~\ref{FIG:Sigma-a-PAT1} are respectively $\cE=0.006$ and $\cE=0.035$ for the case of $\eta=0$ and $\eta=5$. The reconstruction results are very stable with respect to the initial guess we used, and the linear bounds we imposed on $\sigma_a$ do not play a major role in this case either. These observations indicate that the inverse problem is fairly well-conditioned.


\begin{figure}[hbt!]
	\centering
	\includegraphics[width=0.3\textwidth]{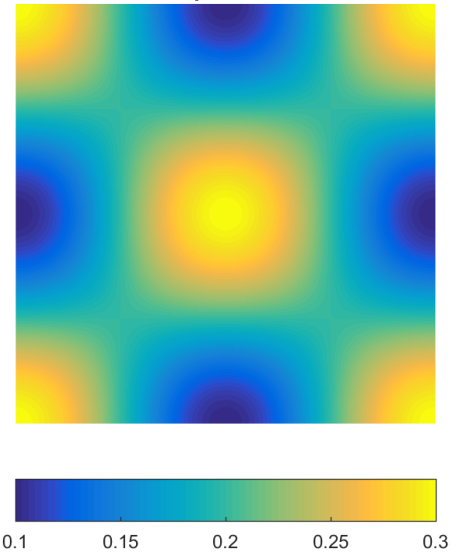}
	\includegraphics[width=0.3\textwidth]{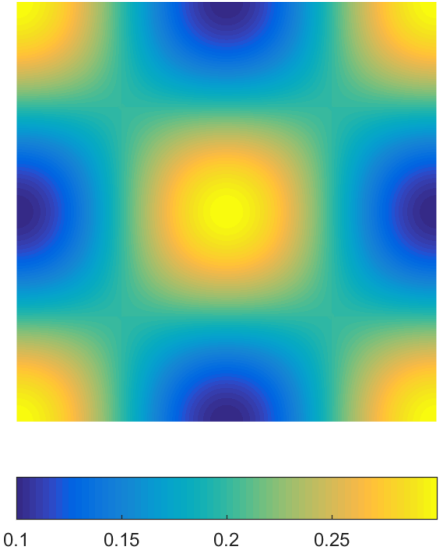}\hskip 0.2cm
	\includegraphics[width=0.3\textwidth]{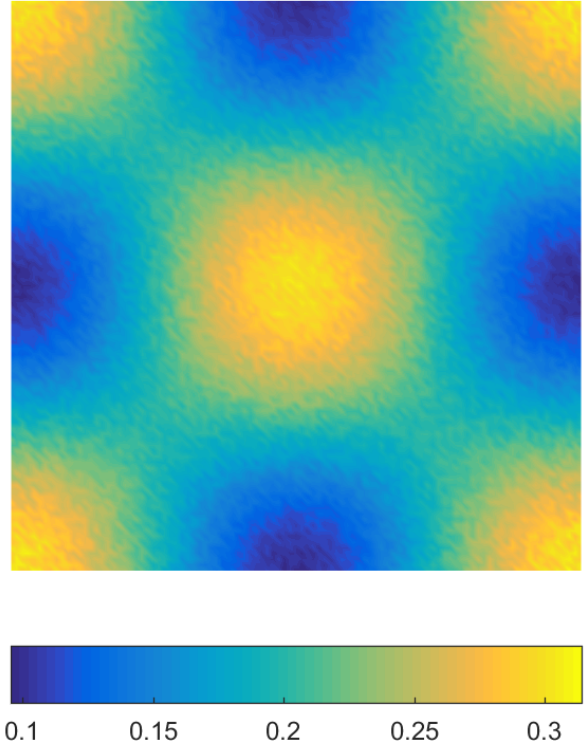}
	\caption{The absorption coefficient $\sigma_a$ in~\eqref{EQ:Sigma-a-PAT2} (left) and the reconstructions using noise-free data ($\eta=0$, middle) and noisy data ($\eta=5$, right).}
	\label{FIG:Sigma-a-PAT2}
\end{figure}
We repeat similar numerical experiments for a few other absorption coefficients. The quality of the reconstruction results are very similar to the case we reported above. For instance, in Figure~\ref{FIG:Sigma-a-PAT2}, we show the reconstructions of a smooth absorption map defined as
\begin{equation}\label{EQ:Sigma-a-PAT2}
\sigma_a (\bx) = 0.2 + 0.1\cos(\pi x-\pi)\cos(\pi y-\pi).
\end{equation}
We again used the four sources to generate four data sets and initialize the reconstruction algorithm at $\sigma_a^0 = 0.1$, very different from the true coefficient. We also impose the same pointwise inequality constraints on the absorption coefficient, i.e. $10^{-3} \leq \sigma_a \leq 0.5$, and the same regularization parameter $\alpha$. The reconstruction quality is comparable to the smooth case and with results from the diffusion model~\cite{CoArKoBe-AO06}. The reconstruction errors are concentrated at the discontinuities, as expected. The relative $L^2$ error in the reconstructions are respectively $\cE=0.004$ and $\cE=0.034$ for the cases of $\eta=0$ and $\eta=5$.

\begin{figure}[hbt!]
	\centering
	\includegraphics[width=0.3\textwidth]{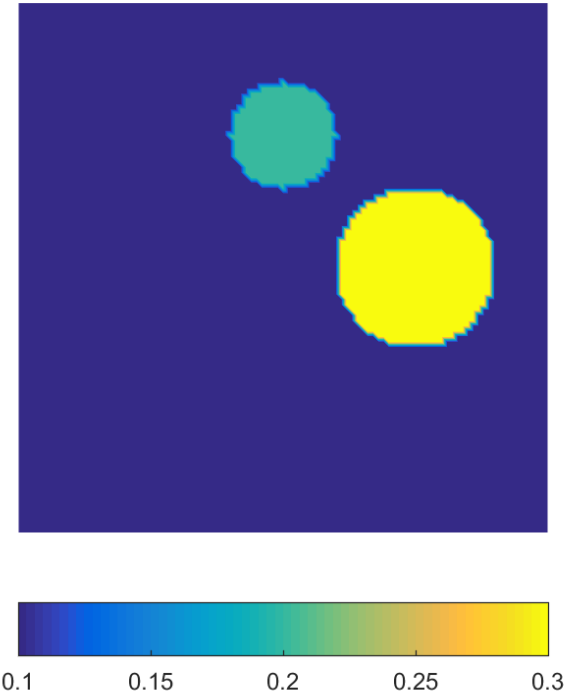}
	\includegraphics[width=0.3\textwidth]{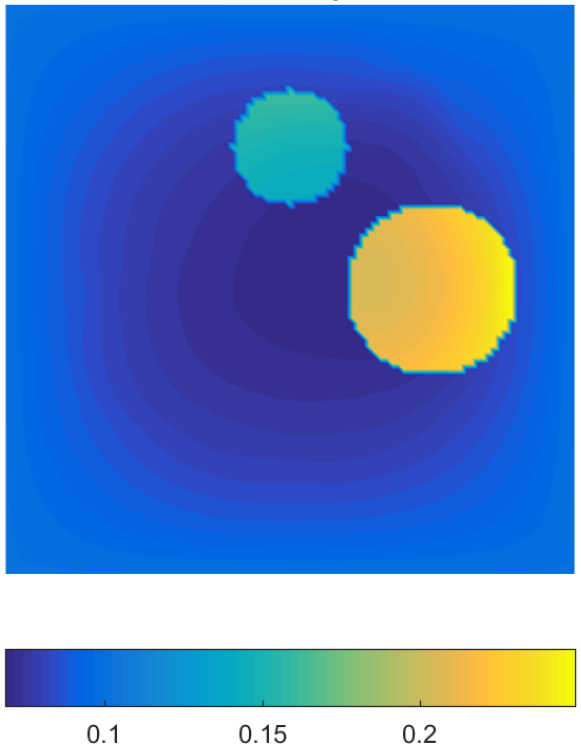}
	\includegraphics[width=0.3\textwidth]{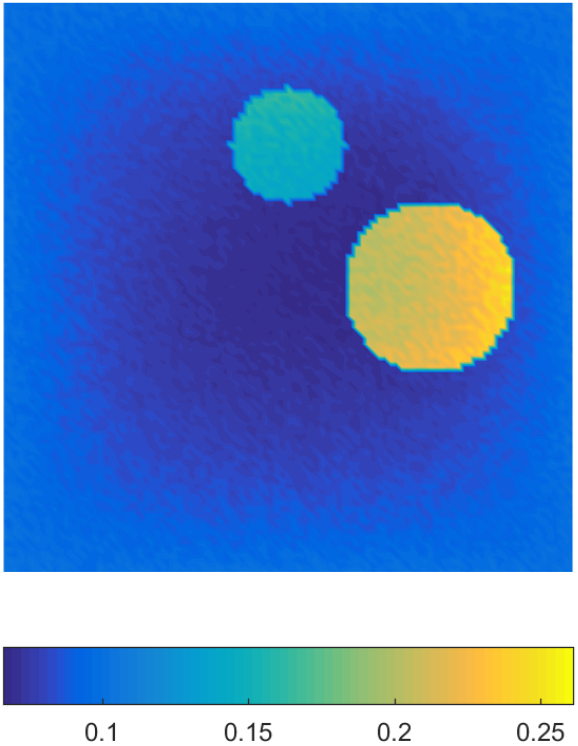}\\
	\includegraphics[width=0.3\textwidth]{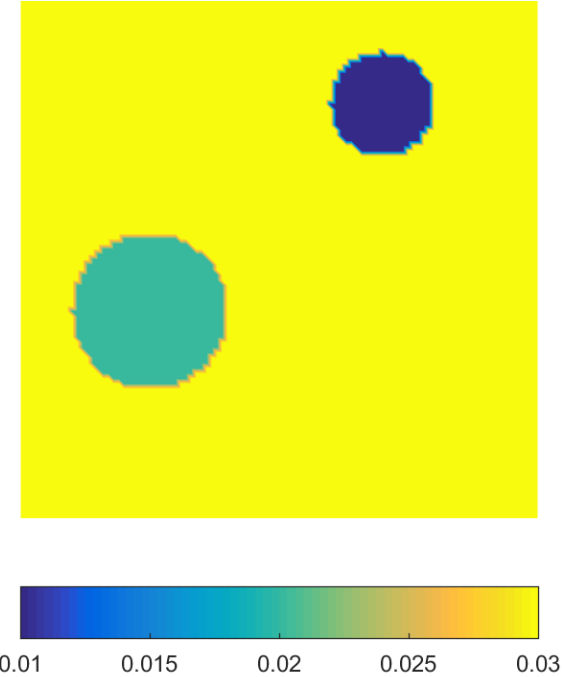}
	\includegraphics[width=0.3\textwidth]{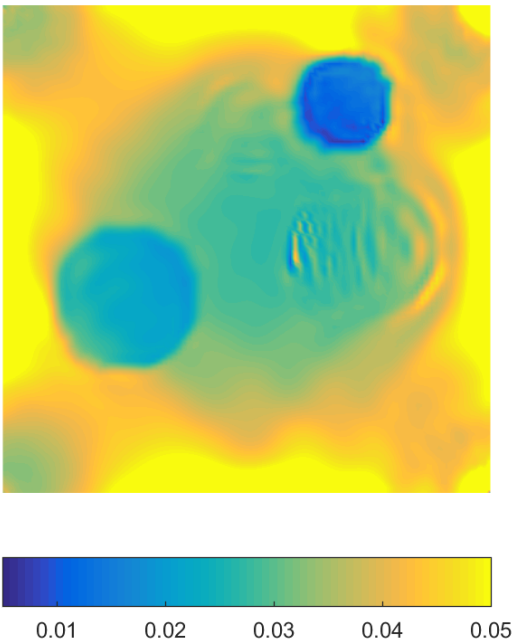}
	\includegraphics[width=0.3\textwidth]{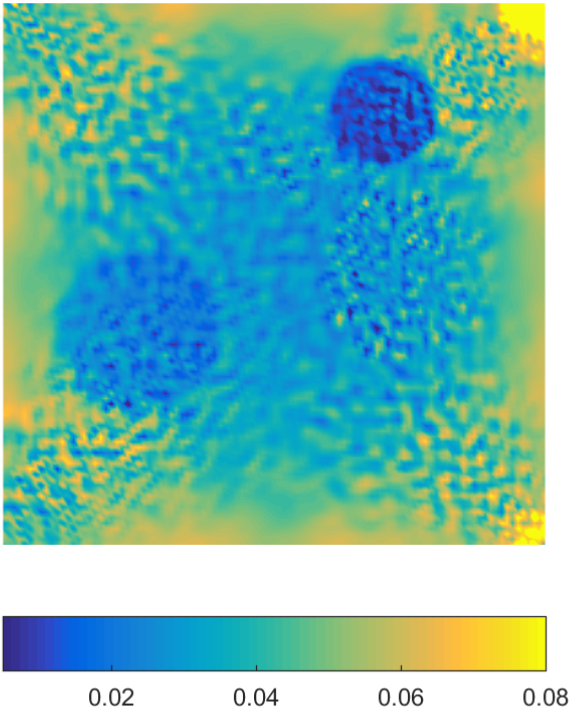}
	\caption{Reconstructions of the absorption coefficient $\sigma_a$ (defined in~\eqref{EQ:Sigma-a-PAT1}, top row) and the diffusion coefficient ($D=1/[3(1-g)\sigma_s]$ with $\sigma_s$ defined in~\eqref{EQ:Sigma-a-s-PAT-s}, bottom row). Show are the true coefficients (left) and the reconstructions using noise-free data (middle) and noisy data with $\eta = 5$ (right).}
	\label{FIG:Sigma-a-s-PAT}
\end{figure}
\paragraph{Experiment 2 [Reconstructing $(\sigma_a, \sigma_s)$ from Acoustic Data].} In this numerical experiment, we perform simultaneous reconstruction of the absorption and the scattering coefficients. The absorption coefficient is the same as the one defined in~\eqref{EQ:Sigma-a-PAT1} while the scattering coefficient is defined as
\begin{equation}\label{EQ:Sigma-a-s-PAT-s}
	\sigma_s(\bx) = 85 + 260\chi_{B_2}(\bx) + 260\chi_{B_3}(\bx), 
\end{equation}
with $B_3 = \{(x,y) : (x-0.5)^2 + (y-0.8)^2 \le (0.3)^2 \}$ and $B_4 = \{(x,y) : (x-1.4)^2 + (y-1.6)^2 \le (0.2)^2 \}$. We again collect data from four different illumination patterns. The reconstruction are initialized at $\sigma_a^0 = 0.1$ and $\sigma_s^0 = 120$. The linear bound constraints are set as $10 \leq \sigma_s \leq 500$ and $0 \leq \sigma_a \leq 1$ in all cases. The regularization strength are selected at $\alpha = 1e-6$ and $\beta = 1e-10$ after a couple of trial and error testings. Note that the discrepancy between the parameter $\alpha$ and $\beta$ mainly come from the fact that the coefficients $\sigma_a$ and $\sigma_s$ have values that are different on a few orders of magnitude: $\sigma_a\sim 0.1$ while $\sigma_s \sim 100$.

\begin{table}[hbt!]
	\centering
	\begin{tabular}{ccccc}
	\hline
	Noise Level & & $\cE_{a}=\frac{\|\hat\sigma_a-\sigma_a\|_{L^2}}{\|\sigma_a\|_{L^2}}$ & & $\cE_{s}=\frac{\|\hat\sigma_s-\sigma_s\|_{L^2}}{\|\sigma_s\|_{L^2}}$\\
	\hline
	$\eta=0$ && 0.023 && 0.182\\
	$\eta=2$ && 0.043 && 0.251\\
	$\eta=5$ && 0.091 && 0.284\\
	$\eta=10$ && 0.174 && 0.385\\
	\hline
	\end{tabular}
	\caption{Relative $L^2$ errors in the simultaneous reconstructions of the absorption and scattering coefficients given in~\eqref{EQ:Sigma-a-PAT1} and ~\eqref{EQ:Sigma-a-s-PAT-s} from acoustic data with different noise levels.}
	\label{TAB:Error}
\end{table}
In Figure~\ref{FIG:Sigma-a-s-PAT} we show the reconstructions results from clean data and noisy data with $\eta=5$. The relative $L^2$ errors in these reconstructions, as well as two additional reconstructions, are summarized in Table~\ref{TAB:Error}. 

We observe that the quality of the reconstructions is again very high when noise level is very low. However, the quality degenerates quickly as the noise level increases, especially for the scattering coefficient. As we observed in the previous cases, the box constraints on the coefficients are very loose and do not have significant impact on the reconstructions. Tighter bounds can be imposed to improve the quality of the reconstructions when these \emph{a priori} information are available. More carefully selection of the regularization coefficient might also help improve the reconstructions. Those are not the directions that we want to pursue in this research.

\subsection{Inversions for cross-model comparisons}
\label{SUBSEC:Num Cross}

The numerical tests in Experiment 1 and Experiment 2 suggest that the PAT inverse problem based on the simplified $P_2$ model has very similar stability properties and reconstruction quality as those based on the classical diffusion model or the radiative transport model~\cite{BaRe-IP11,BaRe-IP12,MaRe-CMS14,SaTaCoAr-IP13}, assuming that the data used in the reconstructions are generated from the same model. In the next numerical experiment, we address a different issue in PAT reconstructions. We are interested in studying the impact of model inaccuracies on the quality of the reconstructions of the absorption and the scattering coefficients. More precisely, assuming that the measurement data are generated by an accuracy model but we perform reconstructions based on a less accurate model. We are interested in the impact of the inaccuracy of the inversion model on the reconstruction results. This problem has been studied in Theorem~\ref{THM:Stab Mult Model} for the simple case of reconstructing only the absorption coefficient. We now provide some numerical evidences.
\begin{figure}[hbt!]
	\centering
	\includegraphics[width=0.35\textwidth]{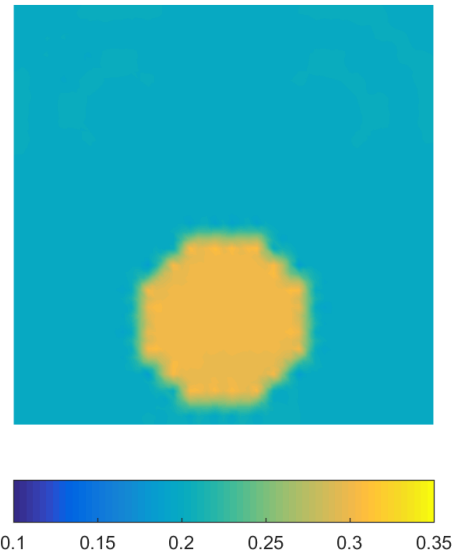}\hskip 0.6cm
	\includegraphics[width=0.355\textwidth]{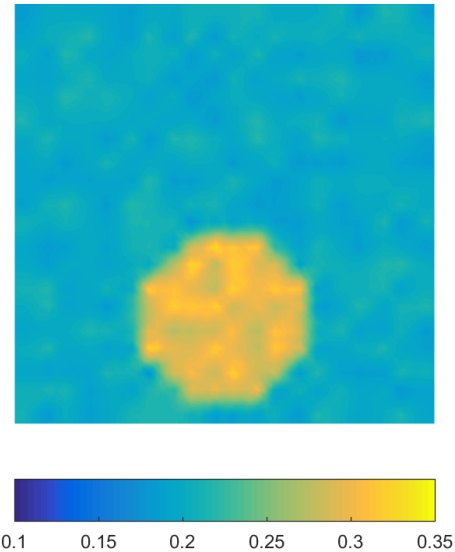}\\
	\includegraphics[width=0.35\textwidth]{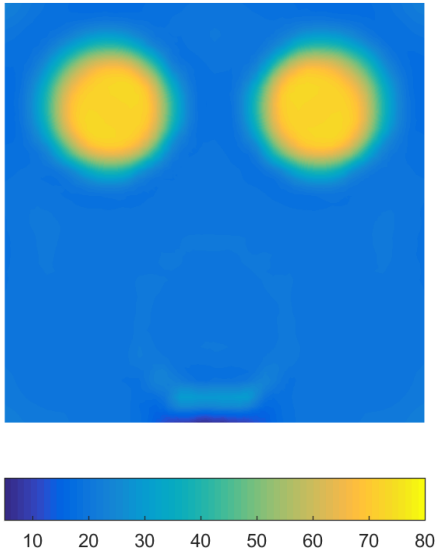}\hskip 0.6cm
	\includegraphics[width=0.35\textwidth]{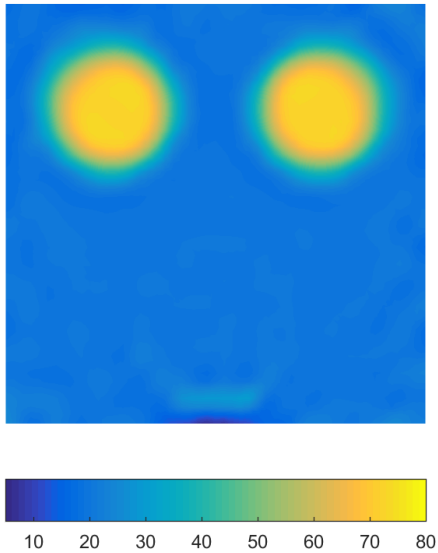} 
	\caption{Reconstructions of the absorption coefficient $\sigma_a$ (top row) and the scattering coefficient $\sigma_s$ (bottom row) defined in~\eqref{EQ:Sigma-a-s-QPAT} from noise-free data ($\eta=0$, left column) and noisy data with $\eta = 5$ (right column). The reconstructions are performed using the simplified $P_2$ model as the forward light propagation model.}
	\label{FIG:Sigma-a-s-QPAT-P2P2}
\end{figure}

\paragraph{Experiment 3 [Cross-model Inversion Comparisons].}
The setup is as follows. The domain is the unit square $\Omega=(0,1)\times(0,1)$. The true absorption and scattering coefficients are
\begin{equation}\label{EQ:Sigma-a-s-QPAT}
\begin{array}{rcl}
\sigma_a (\bx) &=& 0.2 + 0.1\chi_{B_5}(\bx), \\
\sigma_s (\bx) &=& 20 + 60\chi_{B_6}(\bx) + 60\chi_{B_7}(\bx), 
\end{array}
\end{equation}
with $B_5 = \{(x,y) : (x-0.5)^2 + (y-0.25)^2 \le (0.2)^2 \}$, $B_6 = \{(x,y) : (x-0.25)^2 + (y-0.75)^2 \le (0.15)^2 \}$, and $B_7 = \{(x,y) : (x-0.75)^2 + (y-0.75)^2 \le (0.15)^2 \}$. The anisotropic factor is again $g=0.9$. 

We collect data from the four different illumination patterns that we used in the previous numerical experiments. The data are generated by solving the simplified $P_2$ model with the true absorption and scattering coefficients. As a benchmark, we solved the radiative transport model in this case with the finite volume discrete ordinate scheme developed in~\cite{ReBaHi-SIAM06}. We observe that the simplified $P_2$ internal data $H^{P_2}$ is much closer than the classical diffusion internal data $H^{diff}$ to the radiative transport internal data $H^{rte}$ with the same optical coefficients.

To exclude the impact of the acoustic wave model on the comparison, we perform reconstructions directly from the internal data $H$, not from the boundary ultrasound data as in the previous numerical experiments. In all the reconstruction results below, we initialize the inversion algorithm at $\sigma_a^0 = 0.1$ and $\sigma_s^0 = 50$. The linear bound constraints are set as $10 \leq \sigma_s \leq 100$ and $0 \leq \sigma_a \leq 1$ in all cases. The regularization strength are selected at $\alpha = 1e-3$ and $\beta = 1e-8$ after extensive numerical tests.

\begin{table}[hbt!]
	\centering
	\begin{tabular}{ccccc}
	\hline
	Noise Level & & $\cE_a=\frac{\|\hat\sigma_a-\sigma_a\|_{L^2}}{\|\sigma_a\|_{L^2}}$ & & $\cE_s=\frac{\|\hat\sigma_s-\sigma_s\|_{L^2}}{\|\sigma_s\|_{L^2}}$\\
	\hline
	$\eta=0$ && 0.039 && 0.244\\
	$\eta=2$ && 0.041 && 0.244\\
	$\eta=5$ && 0.052 && 0.245\\
	$\eta=10$ && 0.079 && 0.250\\
	\hline
	\end{tabular}
	\caption{Relative $L^2$ errors in the simultaneous reconstructions of the absorption and scattering coefficients in~\eqref{EQ:Sigma-a-s-QPAT} based on the simplified $P_2$ model data with different noise levels.}
	\label{TAB:P2P2-Error}
\end{table}
In Figure~\ref{FIG:Sigma-a-s-QPAT-P2P2} and Figure~\ref{FIG:Sigma-a-s-QPAT-P2P1} we show respectively the reconstruction results using the simplified $P_2$ model and the classical $P_1$ model as the model for light propagation. The relative errors in the reconstructions for the two groups of numerical simulations are summarized in Table~\ref{TAB:P2P2-Error} and Table~\ref{TAB:P2P1-Error} respectively.

\begin{figure}[hbt!]
	\centering
	\includegraphics[width=0.35\textwidth]{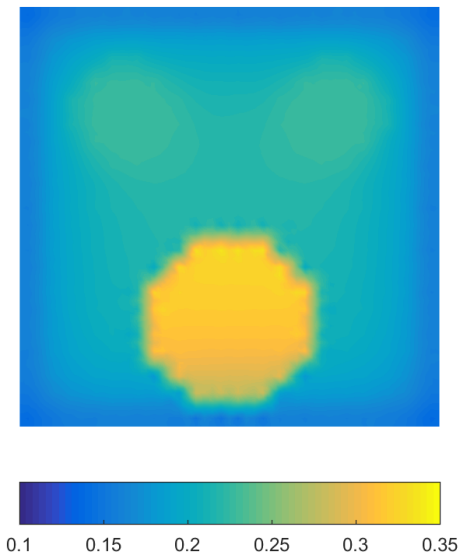} \hskip 1em
	\includegraphics[width=0.345\textwidth]{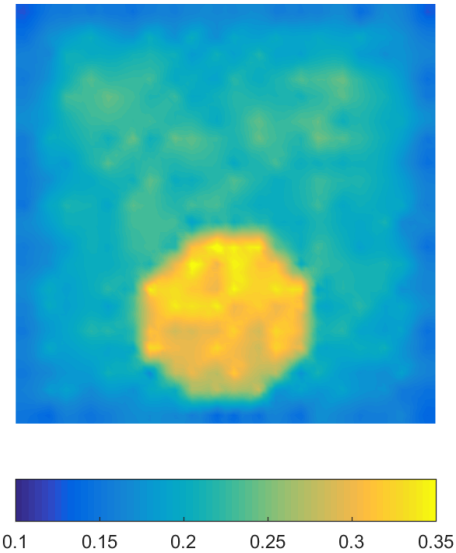}\\
	\includegraphics[width=0.35\textwidth]{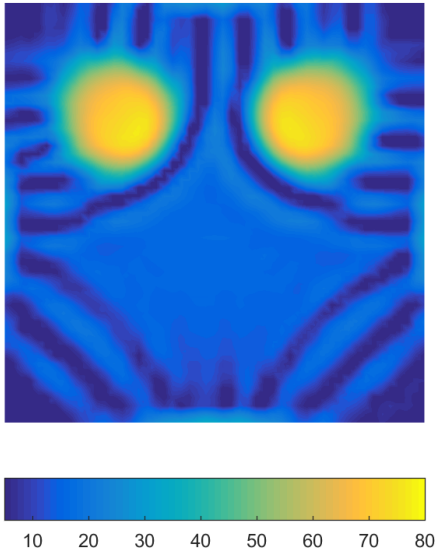} \hskip 1.7em
	\includegraphics[width=0.35\textwidth]{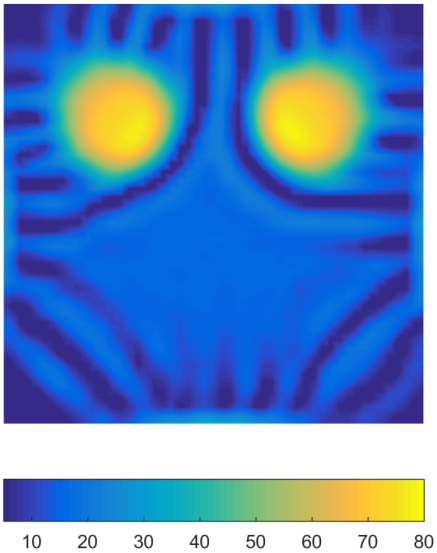}\\
	\caption{Same as Figure~\ref{FIG:Sigma-a-s-QPAT-P2P2} except that the reconstructions are performed using the $P_1$ model~\eqref{EQ:Diff} as the forward light propagation model.}
	\label{FIG:Sigma-a-s-QPAT-P2P1}
\end{figure}
A quick comparison between the first row of Figure~\ref{FIG:Sigma-a-s-QPAT-P2P2} and that of Figure~\ref{FIG:Sigma-a-s-QPAT-P2P1} shows that the reconstructions are significantly different, even though the internal data used in the reconstructions are the same. To be more precise, the relative $L^2$ errors in the reconstructions changed from roughly $(0.04, 0.24)$ in the first row of Figure~\ref{FIG:Sigma-a-s-QPAT-P2P2} to roughly $(0.12, 0.39)$ in the first row of Figure~\ref{FIG:Sigma-a-s-QPAT-P2P1}. This shows that the right hand side of equation~\eqref{EQ:Rec Sigma-a Dif} is relatively large, which indicates that in this specific setting, the solutions to the simplified $P_2$ model and the classical diffusion model are quite different.

\begin{table}[hbt!]
	\centering
	\begin{tabular}{ccccc}
	\hline
	Noise Level & & $\cE_a=\frac{\|\hat\sigma_a-\sigma_a\|_{L^2}}{\|\sigma_a\|_{L^2}}$ & & $\cE_s=\frac{\|\hat\sigma_s-\sigma_s\|_{L^2}}{\|\sigma_s\|_{L^2}}$\\
	\hline
	$\eta=0$ && 0.115 && 0.385\\
	$\eta=2$ && 0.116 && 0.388\\
	$\eta=5$ && 0.120 && 0.383\\
        $\eta=10$&& 0.134 && 0.387\\
	\hline
	\end{tabular}
	\caption{Same as Table~\ref{TAB:P2P2-Error} except that the reconstructions are performed using the $P_1$ model~\eqref{EQ:Diff} as the forward light propagation model.}
	\label{TAB:P2P1-Error}
\end{table}
The last row of Table~\ref{TAB:P2P2-Error} shows the reconstruction results with data containing $10\%$ random noise. Comparing the results with these in the first row of Table~\ref{TAB:P2P1-Error} shows that the former is still better. This implies in some sense that the ``noise'' we introduced here, by using the classical diffusion model~\eqref{EQ:Diff} to replace the simplified $P_2$ model~\eqref{EQ:SP2 simp}, is larger than $10\%$. Therefore, if we believe that the data are generated with accurate model, using the same model to do PAT reconstructions gives better results than using a less accurate model. This is in general true for most of the inverse problems we know. However, for problems such as optical tomography, the benefit of using more accurate models in reconstructions is lost at relatively low noise level~\cite{ChDe-OE09,ReBaHi-AO07,WrScAr-MST07}, which is mainly due to the severe ill-conditioning of the inversion problem. In our case, the inversion is less ill-conditioned (roughly, not mathematically speaking, although this can be characterized mathematically), and the accuracy of the forward model plays a more important role.

\section{Concluding remarks}
\label{SEC:Concl}

We studied in the paper the problem of reconstructing optical absorption and scattering coefficients in quantitative photoacoustic tomography with the simplified $P_2$ model as the model of light propagation in the underlying medium. We showed numerically that one can reconstruct the absorption and scattering coefficients from ultrasound data generated under multiple illuminations, in a relatively stable manner. We also studied the quantitative step of the reconstructions where we developed some uniqueness and stability results under simplified circumstances.

Let us emphasize that we are not claiming that using the simplified $P_2$ model will always be better than using the radiative transfer model or the classical diffusion model. There are many factors that we have to consider when deciding which model to use in a specific situation. Let us assume that we have a working inversion scheme that can reconstruct the internal data $H$ from given ultrasound measurements, with either constant or variable ultrasound speed. Then in terms of accuracy, the simplified $P_2$ model (or even the radiative transport model) should be used when the difference between data $H$ predicted by the radiative transport model and these predicted by the simplified $P_2$ model dominate the noise in the measured internal data $H^*$. Otherwise, the classical diffusion model is sufficient and should be used. In terms of computational cost, the simplified $P_2$ model is comparable to the classical diffusion model, and they are both significantly cheaper than the radiative transport model.

There are multiple aspects of the current research that can be improved. In terms of the comments in the previous paragraph, an important issue to address is to perform similar reconstructions from experimentally measured ultrasound data. It would be interesting to see whether or not we can observe any difference between the reconstructions with the simplified $P_2$ model and those with the classical diffusion model with experimental data.  It is also of great interests to generalize method proposed in this work to the case of the multiple wavelength data. In that setup, we hope to be able to simultaneously reconstruct the absorption, and the scattering and the Gr\"uneisen coefficients as proved in the classical diffusion case~\cite{BaRe-IP11}.  
\section*{Acknowledgments}

This work is partially supported by the National Science Foundation through grants DMS-1321018, DMS-1620473 and DMS-1720306. S.V. would like to acknowledge partial support from the Statistical and Applied Mathematical Sciences Institute (SAMSI).


{\small

}


\end{document}